\tikzset{
    labl/.style={anchor=south, rotate=90, inner sep=.5mm}
}
\definecolor{link}{RGB}{11,0,128}
\tikzset{commutative diagrams/.cd,arrow style=tikz,diagrams={>=latex'}}
\DeclareSymbolFont{cyrletters}{OT2}{wncyr}{m}{n}
\DeclareMathSymbol{\Sha}{\mathalpha}{cyrletters}{"58}
\newcommand{\CB}{\mathcal{B}}
\newcommand{\CC}{\mathcal{C}}
\newcommand{\cD}{\mathcal{D}}
\newcommand{\CE}{\mathcal{E}}
\newcommand{\CF}{\mathcal{F}}
\newcommand{\CG}{\mathcal{G}}
\newcommand{\CO}{\mathcal{O}}
\newcommand\numeq[1]%
\newcommand{\ra}{\rightarrow}
\newcommand{\la}{\leftarrow}
\newcommand{\pr}{^{\prime}}
\newcommand{\ce}{\colonequals}
\newcommand{\ov}{\overline}
\renewcommand{\b}{\textbf}
\newcommand{\isom}{\simeq}			% Isomorphic
\newcommand{\isomto}{\overset{\sim}{\longrightarrow}}
\newcommand{\dhom}{R\mathcal{H}om}
\newcommand{\flst}{f_{\ast}}
\newcommand{\fls}{f_{!}}
\newcommand{\fust}{f^{\ast}}
\newcommand{\fus}{f^{!}}
\newcommand{\ilst}{i_{\ast}}
\newcommand{\iust}{i^{\ast}}
\newcommand{\jlst}{j_{\ast}}
\newcommand{\jls}{j_{!}}
\newcommand{\just}{j^{\ast}}
\newcommand{\Modc}{\mathrm{Mod}_c}
\providecommand{\p}[1]{\left(#1\right)}
\providecommand{\up}[1]{{\upshape(}#1{\upshape)}}
\DeclareMathOperator{\Spec}{Spec}		% Spectrum of a ring
\DeclareMathOperator{\id}{id}			% identity
\DeclareMathOperator{\Tr}{Tr}		% Trace
\DeclareMathOperator{\Aut}{Aut}		% The group of automorphisms
\newcommand{\ba}{\begin{aligned}}
\newcommand{\ea}{\end{aligned}}
\newcommand{\be}{\begin{equation}}
\newcommand{\ee}{\end{equation}}
\newcommand{\pf}{\begin{proof}}
\newcommand{\bpf}{\begin{proof}}
\newcommand{\epf}{\end{proof}}
\newcommand{\bthm}{\begin{thm}}
\newcommand{\ethm}{\end{thm}}
\newcommand{\bprop}{\begin{prop}}
\newcommand{\eprop}{\end{prop}}
\newcommand{\bcor}{\begin{cor}}
\newcommand{\ecor}{\end{cor}}
\newcommand{\brem}{\begin{rem}}
\newcommand{\erem}{\end{rem}}
\newcommand{\brems}{\begin{rems} \hfill \begin{enumerate}[label=\b{\thenumberingbase.},ref=\thenumberingbase]}
\newcommand{\erems}{\end{enumerate} \end{rems}}
\newcommand{\begs}{\begin{egs} \hfill \begin{enumerate}[label=\b{\thenumberingbase.},ref=\thenumberingbase]}
\newcommand{\eegs}{\end{enumerate} \end{egs}}
\newcommand{\eremstweak}{\end{enumerate} \end{rems-tweak}}
\newcommand{\eremst}{\end{enumerate} \end{rems-tweak}}
\newcommand{\blem}{\begin{lemma}}
\newcommand{\elem}{\end{lemma}}
\newcommand{\bconj}{\begin{conj}}
\newcommand{\econj}{\end{conj}}
\newcommand{\bprob}{\begin{Problem}}
\newcommand{\eprob}{\end{Problem}}
\newcommand{\bq}{\begin{Q}}
\newcommand{\eq}{\end{Q}}
\newcommand{\benum}{\begin{enumerate}[label={{\upshape(\alph*)}}]}
\newcommand{\benuma}{\begin{enumerate}[label={{\upshape(\arabic*)}}]}
\newcommand{\benumr}{\begin{enumerate}[label={{\upshape(\roman*)}}]}
\newcommand{\eenum}{\end{enumerate}}
\newcommand{\bc}{\begin{comment}}
\newcommand{\ec}{\end{comment}}
\newcommand{\bd}{\begin{defn}}
\newcommand{\ed}{\end{defn}}
\newcommand{\bdt}{\begin{defn-tweak}}
\newcommand{\edt}{\end{defn-tweak}}
\newcommand{\beg}{\begin{eg}}
\newcommand{\eeg}{\end{eg}}
\newcommand{\bcl}{\begin{claim}}
\newcommand{\ecl}{\end{claim}}
\newcommand{\lab}{\label}
\newcommand{\q}{\quad}
\newcommand{\qq}{\quad\quad}
\newaliascnt{numberingbase}{subsection}
\theoremstyle{plain}
\newtheorem{thm}[numberingbase]{Theorem}
\Crefname{thm}{Theorem}{Theorems}
\Crefname{rethm}{Theorem}{Theorem}
\newtheorem{prop}[numberingbase]{Proposition}
\Crefname{prop}{Proposition}{Propositions}
\newtheorem{Q}[numberingbase]{Question}
\Crefname{Q}{Question}{Questions}
\newtheorem{Problem}[subsection]{Problem}
\Crefname{Problem}{Problem}{Problems}
\newtheorem{conj}[numberingbase]{Conjecture}
\Crefname{conj}{Conjecture}{Conjectures}
\newtheorem{cor}[numberingbase]{Corollary}
\Crefname{cor}{Corollary}{Corollaries}
\newtheorem{lemma}[numberingbase]{Lemma}
\Crefname{subprop}{Proposition}{Propositions}
\Crefname{subcor}{Corollary}{Corollaries}
\Crefname{sublem}{Lemma}{Lemmas}
\theoremstyle{remark}
\newtheorem{claim}[equation]{Claim}
\Crefname{claim}{Claim}{Claims}
\Crefname{subrem}{Remark}{Remarks}
\theoremstyle{definition}
\newtheorem{defn}[numberingbase]{Definition}
\Crefname{defn}{Definition}{Definitions}
\Crefname{conv}{Convention}{Conventions}
\newtheorem{eg}[numberingbase]{Example}
\Crefname{eg}{Example}{Examples}
\newtheorem{rem}[numberingbase]{Remark}
\Crefname{rem}{Remark}{Remarks}
\newtheorem*{rems}{Remarks}
\newtheorem*{egs}{Examples}
\newtheorem{assumption}[numberingbase]{Assumption}
\Crefname{assumption}{Assumption}{Assumptions}
\theoremstyle{plain}
\newtheorem{thm-tweak}[subsection]{Theorem}
\Crefname{thm-tweak}{Theorem}{Theorems}
\newtheorem{lemma-tweak}[subsection]{Lemma}
\Crefname{lemma-tweak}{Lemma}{Lemmas}
\newtheorem{cor-tweak}[subsection]{Corollary}
\Crefname{cor-tweak}{Corollary}{Corollaries}
\newtheorem{prop-tweak}[subsection]{Proposition}
\Crefname{prop-tweak}{Proposition}{Propositions}
\newtheorem{conj-tweak}[subsection]{Conjecture}
\Crefname{conj-tweak}{Conjecture}{Conjectures}
\theoremstyle{definition}
\newtheorem{defn-tweak}[subsection]{Definition}
\Crefname{defn-tweak}{Definition}{Definitions}
\newtheorem{eg-tweak}[subsection]{Example}
\Crefname{eg-tweak}{Example}{Examples}
\newtheorem*{rems-tweak}{Remarks}
\newtheorem{rem-tweak}[subsection]{Remark}
\Crefname{rem-tweak}{Remark}{Remarks}
\newtheoremstyle{subsection-tweak}
   {11pt}
   {3pt}%
   {}
   {}%
   {\bfseries}
   {}%
   {.5em}
   {\thmnumber{\@{#1}{}\@{#2}.}%
    \thmnote{~{\bfseries#3.}}}
\theoremstyle{subsection-tweak}
\newtheorem{pp}[numberingbase]{}
\newcommand{\bpp}{\begin{pp}}
\newcommand{\epp}{\end{pp}}
\theoremstyle{subsection-tweak}
\newtheorem{pp-tweak}[subsection]{}
\numberwithin{equation}{numberingbase}
\def\@tocline#1#2#3#4#5#6#7{
%    \par \addpenalty\@secpenalty\addvspace{#2}%
    \begingroup %\hyphenpenalty\@M
    \@ifempty{#4}{%
%      \@tempdima\csname r@tocindent\number#1\endcsname\relax
    }{%
%      \@tempdima#4\relax
    }%

    \parindent\z@ \leftskip#3\relax \advance\leftskip\@tempdima\relax
%    \rightskip\@pnumwidth plus4em \parfillskip-\@pnumwidth
    #5\hskip-\@tempdima
      \ifcase #1
       \or\or \hskip 2em \or \hskip 1em \else \hskip 3em \fi%
      #6\nobreak\relax
    \dotfill\hbox to\@pnumwidth{\@tocpagenum{#7}}\par
    \nobreak
    \endgroup
  }
 \def\l@section{\@tocline{1}{0pt}{1pc}{}{}}
\renewcommand{\tocsection}[3]{%
  \indentlabel{\@ifnotempty{#2}{\makebox[1.3em][l]{%
    \ignorespaces#1 \bfseries{#2}.\hfill}}}\bfseries{#3}
    \vspace{1.5pt}}
\renewcommand{\tocsubsection}[3]{%
  \indentlabel{\@ifnotempty{#2}{\hspace*{-0.5em}\makebox[2.1em][l]{%
    \ignorespaces#1#2.\hfill}}}#3
    \vspace{1.5pt}}
\newcommand\appendix@section[1]{%
  \refstepcounter{section}%
  \orig@section*{Appendix \@Alph\c@section. #1}%
%  \addcontentsline{toc}{section}{Appendix \@Alph\c@section. #1}%
}
\let\orig@section\section
\g@addto@macro\appendix{\let\section\appendix@section}
\begin{document}

\title{WILDLY COMPATIBLE SYSTEMS AND SIX OPERATIONS}

\author{NING GUO}
\address{D\'{e}partement de Math\'{e}matiques, Universit\'{e} Paris-Sud, Orsay Cedex, France }
\email{ning.guo1@u-psud.fr}
%\urladdr{http://math.berkeley.edu/~kestutis/}
\date{\today}
%\subjclass[2010]{Primary 14F22; Secondary 14F20, 14G22, 16K50.}
%\keywords{Brauer group, \'{e}tale cohomology, perfectoid ring, punctured spectrum, purity.}

\begin{abstract} For a scheme $X$ separated and of finite type over an excellent regular scheme $S$, we define wildly compatible systems of constructible sheaves of modules over finite fields on $X$ for certain vector spaces $V$. The main result is that for $\dim S \leq 1$, wildly compatible systems are preserved by Grothendieck's six operations and Verdier's duality. Finally, for a smooth integral scheme $X$ over a finite field, we prove that all $\ell$-adic compatible systems gives wildly compatible systems.
\end{abstract}
\maketitle

%%%%%%%%%%%%%%%%%%%%%%%%%%%%%%%%%%%%%%%%%%%%%%%%%%%%%%%%%%%%%%%%%%%%%%%%%%%%%%%%%%%%%%%%%%
\section{Introduction}

For a scheme $X$ separated and of finite type over an excellent scheme $S$, we consider a family of locally constant \'etale sheaves $\{\CF_i\}_{i\in I}$ of $\mathbb{F}_{\lambda_i}$-modules on $X$. Each sheaf $\CF_i$ corresponds to a representation $\rho_i$ of the \'etale fundamental group of $X$. The wild ramification of $\rho_i$ is described by Swan conductors in terms of Brauer traces (see \cite{Ser77}*{18.1 and 19.2}). To compare wild ramifications of classes $[\CF_i]$ (so-called virtual sheaves) in the Grothendieck group $K_c(X, \mathbb{F}_{\lambda_i})$, it is natural to find a subgroup (defined by Brauer traces) of $\prod_{i\in I}K_c(X,\mathbb{F}_{\lambda_i})$ to specify virtual sheaves with the same ramification. In addition, having ``the same wild ramification'' should be compatible with functorial operations (for instance, Grothendieck's six operations and Verdier's duality). Wild ramifications of sheaves, in turn, have been related to Euler--Poincar\'e characteristics and cycles and the development is summarized as the following.
\benumr
\item Deligne--Illusie (\cite{Illusie}) defined the notion of ``same wild ramification'' for an open subvarieties $X$ of a normal proper variety $Z$ over an algebraic closed field of positive characteristic, and they proved that two locally constant sheaves over $X$ with the same rank and the same wild ramification along $Z\backslash X$ have the same Euler--Poincar\'e characteristic.

\item When $S$ is an excellent Henselian discrete valuation ring or a field with the residue characteristic exponent $p\geq 1$,  Vidal (\cite{VidalI}, \cite{VidalII}) constructed a subgroup of $K_c(X,\mathbb{F}_{\lambda})$ to define the notion ``virtual wild ramification $0$'' and proved its preservation under Grothendieck's six operations.

\item With Vidal's setting, Yatagawa (\cite{Yatagawa}) defined the subgroup of ``the same wild ramification'' and proved that it is preserved by four operations and Verdier's duality. Further, Saito and Yatagawa (\cite{saito2016wild}) proved that two constructible complexes with the same wild ramification have the same characteristic cycle.
\eenum

In this article, we introduce wildly compatible system of (virtual) constructible sheaves as a more general notion, and prove its preservation under Grothendieck's six operations and Verdier's duality. After generalizing Vidal's definition \cite{VidalI}*{2.1} of local wild part of $\pi_1\p{X}$ to excellent base $S$ without restriction on $\dim S$ (\Cref{wild part of pi}), we establish a formula (\Cref{global-local}) connecting global and local wild parts of $\pi_1$ to find their functorial properties (see Proposition \ref{hahaha},  Corollaries \ref{functorial} and \ref{myrealfunctorial}). With these generalities and properties, wildly compatible systems (\Cref{constr-compatible}) have no restriction on $\dim S$ and rely on certain vector spaces $V$, which are flexible: by changing $V$, we obtain Vidal's and Yatagawa's notion as special examples (\Cref{Compare}). A wildly compatible system is defined by a subgroup $K_c(X/S, I, V)\subset \prod_{i\in I}K_c(X, \mathbb{F}_{\lambda_i})$, and sheaves in $K_c(X/S, I, V)$ can be viewed as having ``compatible wild ramifications'': Brauer traces of these sheaves on each stratum are components of some vectors in $V$. The main result is the preservation of wildly compatible systems as the following.
\begin{thm}[\Cref{sevenoperators}]
Let $S$ be an excellent regular scheme with $\dim S\leq 1$. Let $f: X\to Y$ be an $S$-morphism of schemes over $S$ separated and of finite type. We have:
\benumr
\item $f^{\ast}: K_{c}(Y, I)\to K_c(X, I)$ induces $\fust: K_{c}(Y/S, I,V)\to K_{c}(X/S, I,V)$;
\item $\flst\, : K_c(X, I)\to K_c(Y, I)$ induces $\flst\;\! : K_{c}(X/S, I,V)\to K_{c}(Y/S, I,V)$;
\item $\fls\,\; : K_c(X, I)\to K_c(Y, I)$ induces $\fls\ : K_{c}(X/S, I,V)\to K_{c}(Y/S, I,V)$;
\item $\fus\,\, : K_c(X,I)\to K_c(Y, I)$ induces $\fus\; : K_{c}(X/S, I,V)\to K_{c}(Y/S, I,V)$;
\item $\dhom: K_c(X, I)\times K_c(X, I)\to K_c(X, I)$ induces
 \[
 \text{$\dhom: K_{c}(X/S, I,V)\times K_{c}(X/S, I,V)\to K_{c}(X/S, I,V)$ when $V$ is a subalgebra;}
 \]
\item $-\otimes-: K_c(X, I)\times K_c(X, I)\to K_c(X, I)$ induces
\[
\text{$-\otimes-: K_{c}(X/S, I,V)\times K_{c}(X/S, I,V)\to K_{c}(X/S, I,V)$ when $V$ is a subalgebra;}
\]
\item $D_X: K_c(X,I)\to K_c(X,I)$ induces $D_X: K_{c}(X/S, I,V)\to K_{c}(X/S, I,V)$.
\eenum
\end{thm}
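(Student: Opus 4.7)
The plan is to reduce each of the seven parts to a verification at the level of Brauer traces of local wild inertia groups, using the global--local formula \Cref{global-local} together with the functoriality corollaries \Cref{functorial} and \Cref{myrealfunctorial}. Concretely, membership of a tuple $([\CF_i])_{i\in I}$ in $K_c(X/S,I,V)$ amounts to the statement that, on every stratum of a common stratification on which all $\CF_i$ become lisse, the assembled Brauer traces of wild inertia elements form vectors whose components lie in $V$. So for each operation it suffices to produce a stratification adapted to both input and output and to express the output's Brauer traces in terms of the inputs' so as to preserve the $V$-condition.

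The easy cases are (i), (vi), and (vii). For $\fust$ the wild inertia acting on the pullback is the image of the source wild inertia under the induced map of fundamental groups, so Brauer traces transport directly via \Cref{hahaha}. For the tensor product $-\otimes-$, the Brauer trace of the output at a wild inertia element is the product of the inputs' traces; the hypothesis that $V$ is a subalgebra is exactly what is needed. For Verdier duality $D_X$, one checks that $D_X$ respects any stratification by lisse loci and that its effect on Brauer traces factors through the involution $g\mapsto g^{-1}$ of the wild inertia, under which $V$ is stable.

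The substantial case is (iii) $\fls$; the remaining operations reduce to the three above via Verdier duality: (ii) $\flst = D_Y\circ\fls\circ D_X$, (iv) $\fus = D_X\circ\fust\circ D_Y$, and (v) $\dhom$ follows from $-\otimes-$ and $D_X$. For $\fls$ the approach is the classical reduction to the case of a relative curve over $S$: stratify $Y$ so that the sheaves $R^q f_!\CF_i$ become simultaneously lisse and fibered over the strata. At a geometric point $\bar y$ of such a stratum, a Grothendieck--Ogg--Shafarevich-type formula expresses the Brauer trace of wild inertia on $(R^q f_!\CF_i)_{\bar y}$ as a sum of traces on $\CF_i$ restricted to geometric fibers and local Swan contributions along the horizontal and vertical boundaries of a chosen compactification; each term is governed by a local wild part of $\pi_1$ which maps functorially under pushforward by \Cref{myrealfunctorial}, and so the whole sum again assembles into vectors whose components lie in $V$.

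The main obstacle is the boundary bookkeeping in the proof of (iii): the horizontal components transfer wild ramification from the generic to the special fibers, and without controlling them componentwise one loses $V$-membership. Precisely this step is what limits the theorem to $\dim S\leq 1$, since in that regime the base-change compatibility built into \Cref{global-local}, together with resolution of singularities for the relative curve, lets one identify the local wild part at each boundary component with a localization of one of the inputs' local wild parts, at which point the functoriality statements close the argument.
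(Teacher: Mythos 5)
Your overall architecture is reasonable---reduce everything to Brauer-trace identities on wild parts of $\pi_1$, with $\fls$ for a relative curve as the essential case---but there are two genuine gaps. First, you classify Verdier duality (vii) as an ``easy case'' and then use it to derive (ii) from (iii) and (iv) from (i). But $D_X$ does \emph{not} ``respect any stratification by lisse loci'' in the sense you need: for a closed stratum $i\colon Z\hookrightarrow X$, the restriction $i^{\ast}D_X\CF$ is not determined by $\CF|_Z$, so you cannot verify the $V$-condition stratum by stratum from the dual local systems alone. The paper's proof of (vii) writes $D_X\CF_i=D_X\jls\just\CF_i+D_X\ilst\iust\CF_i$, identifies $D_X\jls\just\CF_i$ with $\jlst\dhom_U(\just\CF_i,\mathbb{F}_{\lambda_i}(d)[2d])$ on a smooth dense open $U$, and then invokes the \emph{pushforward} case to handle $\jlst$, closing by Noetherian induction; so (vii) depends on (ii)/(iii), and your ordering is circular as written. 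The paper sidesteps the duality detour entirely for (ii) and (iv) by using the $K$-group identities $\flst=\fls$ (Illusie--Zheng, generalizing Laumon) and $\fust=\fus$ (Zheng), reducing (i)--(iv) to $\fust$ and $\fls$ alone. Your plan can be salvaged by proving (iii) before (vii), but not in the order you propose.

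Second, your sketch of (iii) omits the two technical points that actually make $V$-membership survive. The mechanism is not a Grothendieck--Ogg--Shafarevich/Swan-conductor computation but Vidal's equivariant trace formula: after passing to a Galois cover $p\colon V\to X$ with group $H$ trivializing a pair of components, one has $\Tr^{\text{Br}}_{\fls a_{i}}(g)=\tfrac{1}{|H|}\sum_{h'}\Tr_{R\Gamma_c(V_{\bar{\tau}}, E_{\lambda_i})}(h')\cdot\Tr^{\text{Br}}_{a_{i}}(p_H(h'))$. One then needs (a) that the coefficients $\Tr_{R\Gamma_c(V_{\bar{\tau}},E_{\lambda_i})}(h')$ are \emph{integers independent of $i$}, so that the output trace vector is a $\mathbb{Q}$-linear combination of input trace vectors---this is precisely where $V$ being a $\mathbb{Q}$-subspace enters; and (b) that every term with nonzero coefficient has $p_H(h')$ in the image of $E_{X/S}$, so that the vectors being combined actually lie in $V$. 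Your phrase about terms ``governed by a local wild part which maps functorially'' gestures at (b) but supplies nothing like (a); without it, nothing forces the combination to remain in a general $\mathbb{Q}$-subspace $V$. (You also invoke resolution of singularities, which is not used; and you omit the preliminary reductions---to $S$ strictly local via \Cref{compatiblestrictlocal}, to finite $I$, and the splitting into generic and special fibres---though those are routine.)
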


This generalizes \cite{VidalII}*{Cor.~0.2} and \cite{Yatagawa}*{Cor.~4.1}. Finally, for a smooth integral scheme $X$ over a finite field, compatible $\ell$-adic virtual constructible sheaves are related to our wildly compatible virtual constructible sheaves. More precisely, the decomposition map $d: K(X, E)\to K(X, F)$ sends compatible $\lambda_i$-adic virtual constructible sheaves to wildly compatible virtual constructible $\mathbb{F}_{\lambda_i}$-sheaves (Theorem \ref{ladic}).

%%%%%%%%%%%%%%%%%%%%%%%%%%%%%%%%%%%%%%%%%%%%%%%%%%%%%%%%%%%%%%%%%%%%%%%%%%%%%%%%%%%%%%%%%%%%%%%%%%%%%%
\subsection{Conventions} By $\p{X/S}$ we always mean a scheme $X$ separated and of finite type over an excellent scheme $S$ (and $Y/R$ is similar). We use $\ov{s}\ra S$ to denote geometric points of $S$, and $X_{\ov{s}}$ is the base change of $X$ over the strict Henselization $S_{\ov{s}}$. 
%%%%%%%%%%%%%%%%%%%%%%%%%%%%%%%%%%%%%%%%%%%%%%%%%%%%%%%%%%%%%%%%%%%%%%%%%%%%%%%%%%%%%%%%%%%%%%%

\subsection*{Acknowledgements.} I thank to my advisor, Weizhe Zheng. With his forward-looking instruction and guidance, this article was written smoothly. He also helped me to correct mistakes and defects with infinite patience. I thank to Lei Fu for important suggestions for revision.

%%%%%%%%%%%%%%%%%%%%%%%%%%%%%%%%%%%%%%%%%%%%%%%%%%%%%%%%%%%%%%%%%%%%%%%%%%%%%%%%%%%%%%%%%%%%%%%%

\section{The wild part of $\pi_1$}
\label{wildpart}
Nagata's theorem predicts that every scheme $X$ separated and of finite type over an excellent scheme $S$ has a compactification $\ov{X}$. It turns out that wild ramifications of constructible \'etale $\mathbb{F}_{\lambda_i}$-sheaves $\CF_i$ over $X$ is considered over the boundary $\ov{X}\backslash X$. By the theory of modular representation, the elements of $\pi_1(X)$ whose super-natural orders are prime to $\lambda_i$ form a set $\pi_1(X)_{\text{reg}}$, where we take Brauer traces of $\CF_i$ (\ref{Brauer-traces} or see \cite{Ser77}*{18.1}). After a recollection of Nagata compactifications (\ref{Ncpt}--\ref{Nndcpt}), we prove a global-local formula (\Cref{global-local}) of wild parts of $\pi_1$ (\Cref{wild part of pi}), which corresponds to $\pi_1(X)_{\text{reg}}$ referred above. By this formula, any wildly compatible system is locally wildly compatible (\Cref{compatiblestrictlocal} (ii)).

\bpp[Nagata compactifications]\label{Ncpt}
For a scheme $X$ over a quasi-compact quasi-separated scheme $S$, we say $f: X\to S$ is \emph{compactifiable} or has a \emph{compactification}, if $f$ is factorized by an $S$-scheme $\ov{X}$ as $f\colon X\stackrel{i}{\hookrightarrow}\ov{X}\stackrel{\ov{f}}{\ra} S$, where $i$ is an open immersion and $\overline{f}$ is a proper morphism. We denote this compactification of $f$ by $\p{\ov{X}, i, \ov{f}}$. By Nagata's compactification theorem (\cite{Conrad}*{Thm.~4.1}), $f$ is compactifiable if and only if it is of finite type and separated. In this case, all compactifications of $f$ form a category $\CC$. The objects of $\CC$ are triplets $(\overline{X}, i, \overline{f})$. A morphism between two objects $(Z_1, i_1, f_1), (Z_2, i_2, f_2)$ in $\CC$ is such a morphism $g: Z_1\to Z_2$ so that $g\circ i_1=i_2$ and $g\circ f_2=f_1$. In this case, we say that $Z_2$ is \emph{dominated} by $Z_1$, or $Z_1$ \emph{dominates} $Z_2$ as compactifications of $X\to S$.

In fact, $\CC$ is a cofiltered category (\cite{Deligne}*{3.2.6}):
\benum
\item[1)] given $Z_1, Z_2\in Obj\p{\CC}$, there exists $Z_3\in Obj\p{\CC}$ and morphisms
    $Z_1\stackrel{g_1}{\la}Z_3\stackrel{g_2}{\ra}Z_2$;
\item[2)] given two morphisms $Z_1 \overset{r}{\underset{s}\rightrightarrows} Z_2 $
in $\CC$, there exists $Z_3\in Obj\p{\CC}$ and a morphism $t: Z_3\to Z_1$ in $\CC$ such that $s\circ t=r\circ t$.
\eenum
   For 1), it suffices to take $Z'\ce Z_1\times_S Z_2$ and a compactification $Z_3$ of $(i_1, i_2): X\to Z'$. For 2), we denote by $k: K\ra Z_1$ the equalizer of $r$ and $s$. By universal property of equalizer, we factorize $i_1$ as $i_1=k\circ i$. Even though $k$ is proper and $K$ is proper over $S$, the morphism $i$ is not necessarily be an open immersion. Therefore, we take a compactification of $i: X\to K$ as $X\stackrel{i_3}{\hookrightarrow} Y \stackrel{p}{\to} K$. The composition $k\circ p: Y\to Z_1$ is the desired morphism.

In the sequel, we assume that $S$ is an excellent scheme.
\epp

\bpp[Nagata normal compactifications] \lab{ngtnormcmp} For $\p{X/S}$ with normal connected $X$ and a Nagata compactification $f\colon X\ra \ov{X}\stackrel{\ov{f}}{\ra}S$, we consider the normalization $\ov{X}^{\nu}$ of $\ov{X}$.
\epp
\bcl
The normalization $\ov{X}^{\nu}$ is also a Nagata compactification of $f: X\to S$.
\ecl
\bpf
By the universal property of normalization, there is a morphism $i: X\to \overline{X}^{\nu}$. We show that $X\stackrel{i}{\longrightarrow}\overline{X}^{\nu}\stackrel{\ov{f}\circ \nu}{\longrightarrow}S$ is another compactification of $f\colon X\to S$. By excellence assumption of schemes over $S$, the normalization $\nu$ is finite so $\ov{f}\circ \nu\colon \ov{X}^{\nu}\to S$ is proper. On the other hand, the base change $Y\ce \overline{X}^{\nu} \times_{\overline{X}} X\ra X$ is a finite birational morphism from an integral scheme to a normal connected scheme, hence is an isomorphism. Therefore, $i\colon X\to \ov{X}^{\nu}$ is an open immersion.
\epf

All normal compactifications of $X\to S$ form a full subcategory $\CC'$ of $\CC$. Further, $\CC'$ is co-cofinal:
\begin{itemize}
\item[1)] for every $x\in Obj(\CC)$, there is an object $y\in Obj(\CC')$ and a morphism $y'\to x$ in $Morph(\CC)$;
\item[2)] given $x\in Obj(\CC)$ and $y_1,y_2\in Obj(\CC')$ with $y_1\to x$ and $y_2\to x$, there exists $y_3\in Obj(\CC')$ fitting into the following commutative diagram
$$
\begin{tikzcd}
y_1 \arrow{dr}
& y_{3} \arrow{l} \arrow[rightarrow]{r} \arrow{d}
& y_{2}\arrow{ld} \\
& x
& \qq .
\end{tikzcd}
$$
\end{itemize}
It is clear that 1) and 2) follows directly from the fact that for any two normal compactifications in $\CC'$, there is always a normal compactification dominating them.

\bpp[Nagata normal dense compactifications] \label{Nndcpt}
 On the basis of \ref{ngtnormcmp}, we consider normal compactifications $\overline{X}$ such that $X\to \overline{X}$ are dominant open immersions. Here we assume that $X$ is a normal connected scheme separated and of finite type over an excellent scheme $S$. In this case, for an arbitrary normal compactification $\overline{X}$, we can always take the connected component $\overline{X}'$ of $\overline{X}$ that contains $X$. Hence $X$ is open dense in $\overline{X}'$, which is also a normal compactification of $X\to S$ dominating $\overline{X}$. Subsequently, all normal compactifications of $X\to S$ such that $X$ is an open subscheme form a cofiltered co-cofinal subcategory of $\CC'$ (and of $\CC$).

By the results about co-cofinality above, for convenience, we always assume that $X$ is an open dense subscheme in every normal compactification of $X\to S$.
\epp

\bd[Wild part of $\pi_1$] \label{wild part of pi}
For an open subscheme $X$ of a normal connected scheme $\ov{X}$, we let $x$ be a geometric point of $\ov{X}$ with the characteristic exponent $p\geq 1$. The strict Henselization $\ov{X}_x$ of $\ov{X}$ at $x$ has a fiber product $X_x\ce \ov{X}_x\times_{\ov{X}}X$. Let $v$ be a geometric point of $X_x$ lying above a geometric generic point $\eta$ of $X$. We define some subsets of $\pi_1(X,\overline{\eta})$ as follows:
\benum
\item[\up{i}] $E_{x,v}$ is the union of images of $p$-Sylow subgroups of $\pi_1(X_x,v)$ in $\pi_1(X,\eta)$;
\item[\up{ii}] $E_{X,x}$ is the union of conjugates of $E_{x,v}$ by $g$ in $\pi_1(X,\eta)$;
\item[\up{iii}] $E_{X,\ov{X}}$ is the closure of the union of $E_{X,x}$ for all geometric points $x$ of $\ov{X}$.
\eenum
For $\p{X/S}$ with normal connected $X$, we define:
\benum
\item[\up{iv}] $E_{X/S}$ is the intersection of $E_{X,\ov{E}}$ for all normal compactifications $\ov{X}$ of $X\to S$;
\item[(v)] for the image $\tau$ of $\eta$ in $X$, the subset $E_{\tau/S}$ is the intersection of images of $E_{U/S}$ in $E_{X/S}$ for all open dense subscheme of $X$.
\eenum
\ed
\bcl \label{normln-cptfn}
\Cref{wild part of pi} (ii) is independent of choices of $v$.
\ecl
\bpf
For another geometric point $w$ above ${\eta}$, there is an isomorphism of fibre functors $\gamma: F_{v}\isom F_{w}$ inducing an isomorphism
\[
\ba
\phi\colon \pi_1(X_x, v)=\Aut(F_v) & \isomto \Aut(F_w)=\pi_1(X_x,w) \qq
h  \longmapsto  \gamma\circ h \circ \gamma^{-1}.
\ea
\]
Therefore, the image $g$ of $\gamma$ in $\pi_1(X,\eta)$ induces an inner automorphism such that the following diagram
\[
\begin{tikzcd}
  \pi_1(X_x, v) \ar{r} \ar{d}{\phi} & \pi_1(X, \eta)  \ar{d}{(-)^g}\\
  \pi_1(X_x, w) \ar{r} & \pi_1(X, \eta)
\end{tikzcd}
\]
commutes, where $(-)^g$ denotes the inner automorphism induced by $g$.
\epf

\brem \label{Vidalcompare}
Here we compare our Definition \ref{wild part of pi} with Vidal's notion (\cite{VidalI}*{\S\S2.1}, \cite{VidalII}*{\S1}).
\begin{itemize}
\item Let $S$ be the spectrum of an excellent Henselian discrete valuation ring of mixed characteristic $(0,p)$ (that is, the characteristic of residue field at generic point is $0$ and for closed point is $p$). Let $\overline{x}$ be the geometric point above $\eta$, then $E_{X,\overline{x}}=\{1\}$. On the other hand, $E^{\text{Vidal}}_{X,{x}}$ is not trivial in general. Therefore, $E_{X,\overline{x}}\neq E^{\text{Vidal}}_{X,{x}}$ in this case.
\item Further, we consider $E_{X,\overline{X}}$ and $E^{\text{Vidal}}_{X,\overline{X}}$ for a normal compactification $\overline{X}$ of $X\to S$. A finite quotient $Q$ of $\pi_1(X)$  corresponds to a  Galois cover $Y\to X$. Let $\overline{Y}$ be the normalization of $\overline{X}$ in $Y$.  From \cite{VidalII}*{Proof~6.1} we know that $g\in Q$ belongs to $E_{X,\overline{X}}$ if and only if $g$ fixes a point of $(\overline{Y})_s$, where $s$ is the closed point of $S$. Similarly, $g\in Q$ belongs to $E^{\text{Vidal}}_{X,\overline{X}}$ if and only if $g$ fixes a point of $\overline{Y}$. But the locus of fixed points $\overline{Y}^g$ in $\overline{Y}$ of $g$ is proper over $S$, since proper morphism is stable under composition. Consequently, we find that $\overline{Y}^g\cap \overline{Y}_s\neq \phi$ and $E_{X,\overline{X}}=E^{\text{Vidal}}_{X,\overline{X}}$.
\end{itemize}
\erem
\brem
With Definition \ref{wild part of pi} we can also define tameness for Galois covers, which is a classical notion. Let $X$ be a  normal connected scheme separated and of finite type over $\Spec (\mathbb{Z})$. A Galois cover $Y\to X$ is called \emph{tame} if $E_{X/\Spec (\mathbb{Z})}(Q)=\{1\}$, where $Q$ is the finite quotient of $\pi_1(X)$ associated to the Galois cover $Y\to X$. When $Y$ is regular, this is the notion of numerical tameness in \cite{KerzSchmidt}.
\erem

\subsection*{A global-local formula}
Given a geometric point $\ov{s}$ of $S$, the strict Henselization $S_{\overline s}\to S$ at $\overline{s}$ induces a base change of $X$ and $\overline{X}$. Though the base change $X_{\ov{s}}$ is not connected in general, we can also define its normal compactification as the disjoint union of normal compactifications of its all connected components. All normal compactifications of $X_{\ov{s}}$ also form a cofiltered category. In this sense, the base change $\p{\ov{X}}_{\ov{s}}$ is a normal compactification of $X_{\overline s}$ over $S_{\overline s}$. Conversely, given a normal compactification $Z_{\ov{s}}$ of $X_{\overline s}\to S_{\overline s}$, we want to prove that it can be dominated by the base change of a normal compactification of $X\to S$ (\Cref{descent}), which leads to a formula relating global and local wild parts of $\pi_1$ as the following.

\begin{thm}\label{global-local}
For $\p{X/S}$ with $\dim S<3$ and normal connected $X$, we have
$$E_{X/S}=\overline{\bigcup_{\overline s\to S}\bigcup_{g\in \pi_1(X,\eta)}g^{-1}E'_{X_{\overline s}/S_{\overline s}}g},$$
where $E'_{X_{\overline s}/S_{\overline s}}$ are images of $E_{X_{\overline s}/S_{\overline s}}$ in $\pi_1(X,\eta)$.
\end{thm}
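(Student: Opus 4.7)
The plan is to prove the two inclusions separately, with the crucial input being a descent result (to appear below as \Cref{descent}) that every normal compactification of $X_{\ov s} \to S_{\ov s}$ is dominated by the componentwise normalization of the base change of some normal compactification of $X \to S$. This descent is where the hypothesis $\dim S < 3$ is essential, and it will be the main obstacle of the whole argument.

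For the inclusion $\supseteq$, I would fix a normal compactification $\ov X$ of $X \to S$ and show that $h^{-1} E'_{X_{\ov s}/S_{\ov s}} h \subseteq E_{X, \ov X}$ for every $\ov s \to S$ and every $h \in \pi_1(X, \eta)$; intersecting over $\ov X$ and using that $E_{X, \ov X}$ is already closed would then absorb the outer closure on the right-hand side. The normalization $\ov X^\nu_{\ov s}$ of the base change is itself a normal compactification of $X_{\ov s} \to S_{\ov s}$ by \Cref{ngtnormcmp}, so $E_{X_{\ov s}/S_{\ov s}} \subseteq E_{X_{\ov s}, \ov X^\nu_{\ov s}}$. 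Since $S_{\ov s} \to S$ is a cofiltered pro-\'etale limit, the strict Henselizations agree: for a geometric point $x$ of $\ov X_{\ov s}$ lying above $\tilde x \in \ov X$, one has $(\ov X_{\ov s})_x \cong \ov X_{\tilde x}$, hence $(X_{\ov s})_x \cong X_{\tilde x}$, so the images in $\pi_1(X, \eta)$ of the $p$-Sylow subgroups on the two sides coincide. Combined with the fact that $E_{X, \ov X}$ is $\pi_1(X,\eta)$-conjugation invariant, this yields the inclusion.

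For the inclusion $\subseteq$, I would reduce to finite quotients: given $g \in E_{X/S}$ and an open normal $N \triangleleft \pi_1(X, \eta)$, it suffices to show that $gN$ meets the right-hand side before closure. Writing $Q = \pi_1(X, \eta)/N$, the subsets $E_{X, \ov X}(Q) \subseteq Q$ form a decreasing system in $\ov X$ (a domination $\ov X_1 \to \ov X_2$ gives $E_{X, \ov X_1}(Q) \subseteq E_{X, \ov X_2}(Q)$ because the image of a pro-$p$ subgroup under a continuous homomorphism lies in some $p$-Sylow); as $Q$ is finite and the category of normal compactifications is cofiltered, this system stabilizes at some $\ov X^*$. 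Stabilizing analogously for each $X_{\ov s}/S_{\ov s}$ relative to the induced quotient and invoking \Cref{descent} to dominate the winning local compactifications by a single global base change, I obtain a normal compactification $\ov X$ of $X/S$ such that $E_{X, \ov X}(Q) = E_{X/S}(Q)$ and, for every contributing $\ov s$, the base change $\ov X^\nu_{\ov s}$ realizes $E_{X_{\ov s}/S_{\ov s}}$ in the appropriate finite quotient of $\pi_1(X_{\ov s})$.

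Then $g \bmod N \in E_{X, \ov X}(Q)$ is a $\pi_1(X,\eta)$-conjugate of the image in $Q$ of a $p$-Sylow element $\sigma \in \pi_1(X_x, v) = \pi_1((X_{\ov s})_x, v)$ for some geometric point $x$ of $\ov X$ lying above some $\ov s \in S$. Reinterpreted in $\pi_1(X_{\ov s})$, the element $\sigma$ lies in $E_{X_{\ov s}, \ov X^\nu_{\ov s}}$, hence, by our choice of $\ov X$, its image in the relevant finite quotient of $\pi_1(X_{\ov s})$ lies in $E_{X_{\ov s}/S_{\ov s}}$. Its image in $\pi_1(X, \eta)$ furnishes the required element of $h^{-1} E'_{X_{\ov s}/S_{\ov s}} h$ congruent to $g$ modulo $N$. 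The principal obstacle throughout is \Cref{descent}; the remainder is careful bookkeeping of strict Henselizations under base change and of pro-$p$ images between decomposition groups.
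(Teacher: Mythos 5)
Your proposal follows essentially the same route as the paper's proof: both rest on the identification $(X_{\ov s})_y \isom X_x$ of strict localizations (so that the union over geometric points of $\ov X$ regroups as a union over $\ov s \to S$ of the local sets), on Proposition~\ref{descent} as the sole place where $\dim S < 3$ enters, and on passing to finite quotients $Q$ of $\pi_1(X,\eta)$ together with cofilteredness of the category of normal compactifications to commute the closure and the intersection over $\ov X$ with the unions over $\ov s$ and $g$. The paper merely packages your two inclusions as a single chain of equalities.
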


\begin{proof}
We may assume that $S$ is connected. By Definition \ref{wild part of pi},
\[
\text{
$E_{X, \overline{X} }=\overline{\bigcup_{{x}}\bigcup_{g\in \pi_1(X,\eta)}g^{-1}E_{{x},{v}}g}$, \q where ${x}$ goes through geometric points of $\ov{X}$.
}
\]
 For a geometric point $x$ of $\ov{X}$, its composition with $\ov{X}\to S$ is a geometric point $s$ of $S$. So $x$ gives rise to a geometric point $y$ of $\p{\ov{X}}_{\ov{s}}$. We have the following commutative diagram
\[
\begin{tikzcd}
  \pi_1((X_{\ov{s}})_{y}, w) \ar{r} \ar{d} & \pi_1(X_{\ov{s}}, \xi) \ar{d}  \\
  \pi_1(X_{x}, v) \ar{r} & \pi_1(X, \eta),
\end{tikzcd}
\]
where $\xi$ is a geometric generic point above $\eta$ and $w$ is a geometric point above $\xi$. The first column is an isomorphism, since $(X_{\ov{s}})_{y}\isom X_x$. It follows that the image $E'_{y,w}$ of $E_{y,w}$ in $\pi_1(X,\eta)$ is $E_{x,v}$, and
\begin{align*}
  E_{X,\ov{X}} & = \overline{\bigcup_{\ov{s}}\bigcup_{y}\bigcup_{g\in \pi_1(X,\eta)}g^{-1}E_{ {y},{w}}g} \\
   & =\overline{\bigcup_{\overline{s}}\bigcup_{g\in \pi_1(X,\eta)}g^{-1}\ E'_{X_{\overline{s}}, \p{\ov{X}}_{\ov{s}}}\ g},
\end{align*}
where $\ov{s}$ goes through geometric points of $S$ and $y$ goes through geometric points of $(\ov{X})_{\ov{s}}$. For a fixed $\overline{s}
\to S$, by Proposition \ref{descent} below, the base changes of $\overline{X}$ are co-cofinal in the category of normal compactifications of $X_{\overline{s}}\to S_{\overline{s}}$. By taking intersections on both sides, we find
\begin{align*}
E_{X/S}= \bigcap_{X\to\ov{X}\to S}E_{X, \ov{X} }& \numeq{1} \bigcap_{X\to\ov{X}\to S}\biggl(\ov{\bigcup_{\ov{s}\to S}\bigcup_{g\in \pi_1(X,{\eta})}g^{-1}E'_{X_{\ov{s}}, (\ov{X})_{\ov{s}}}g}\biggr) \\
&\numeq{2} \varprojlim_{Q}\bigcap_{X\to\ov{X}\to S}\biggl(\Bigl(\bigcup_{\ov{s}\to S}\bigcup_{g\in \pi_1(X,{\eta})}g^{-1}E'_{X_{\ov{s}}, (\ov{X})_{\ov{s}}}g\Bigr)(Q)\biggr) \\
& \numeq{3} \varprojlim_{Q}\Bigl(\bigcup_{\ov{s}\to S}\bigcup_{g\in \pi_1(X,{\eta})}g_Q^{-1}\bigcap_{\ov{X}}E'_{X_{\ov{s}}, (\ov{X})_{\ov{s}}}(Q)g_Q\Bigr)\\
& \numeq{4} \varprojlim_{Q}\Bigl(\bigl(\bigcup_{\ov{s}\to S}\bigcup_{g\in \pi_1(X,{\eta})}g^{-1}E'_{X_{\ov{s}}/S_{\ov{s}}}g\bigr)(Q)\Bigr)\\
& \numeq{5} \ov{\bigcup_{\ov s\to S}\bigcup_{g\in \pi_1(X, {\eta})}g^{-1}E'_{X_{\ov s}/S_{\ov s}}g}.
\end{align*}
Here $Q$ goes through all finite quotients of $\pi_1(X, {\eta})$ and $g_{Q}$ denotes the image of $g$ in $Q$. The equalities (2) and (5) come from that a closed subset in a profinite group is the inverse limit of all its finite quotients. The (3) is by the distribution law for intersection and union and the fact that the category of normal compactifications is cofiltered.
\end{proof}

\bprop\label{descent}
We let $X$ and $S$ be as assumed in \Cref{global-local}. For any normal compactification $Z_{\ov{s}}$ of $X_{\ov{s}}\to S_{\ov{s}}$, there exists a normal compactification $\ov{X}$ of $X\to S$ such that $\ov{X}\times_S S_{\ov{s}}$ dominates $Z_{\ov{s}}$.

%Let $S$ and $X$ be the same as in Theorem \ref{global-local}.  Let $\ov{s}$ be a geometric point of $S$ and let $S_{(\overline{s})}\to S$ be the strict Henselization of $S$ at $\overline{s}$. Let $X_{(\overline{s})}$ be the base change of $X$ to $S_{(\overline{s})}$. Then given an arbitrary normal compactification $\overline{X_{(\overline{s})}}$ of $X_{(\overline{s})}\to S_{(\overline{s})}$, there exists a normal compactification $\overline{X}$ of $X\to S$ such that its base change by $S_{(\overline{s})}\to S$ dominates $\overline{X_{(\overline{s})}}$ as a normal compactification of $X_{(\overline{s})}\to S_{(\overline{s})}$.
\eprop

\bpf
The base change of any normal compactification of $X\to S$ to $S_{\overline{s}}$ is a normal compactification of $X_{\overline{s}}\to S_{\overline{s}}$. To find the desired $\ov{X}$, we prove in the following steps.
\benumr
\item We have $S_{\ov{s}}=\varprojlim_{i\in I}S_i$ for each affine scheme $S_i$ \'etale over $S$. Because $X_{\ov{s}}$ and $Z_{\ov{s}}$ are of finite type over $S_{\ov{s}}$ and $S_{\ov{s}}$ is excellent, a fortiori Noetherian, by \cite[\href{https://stacks.math.columbia.edu/tag/01TX}{01TX}]{SP}, $X_{\ov{s}}$ and $Z_{\ov{s}}$ are of finite presentation over $S_{\ov{s}}$. By \cite{EGAIV3}*{8.8.2}, there exists $i_0\in I$ such that the normal compactification $X_{\ov{s}}\ra Z_{\ov{s}}$ descend over $S\pr\ce S_{i_0}$ as $X_{S\pr}\ra Z_{S\pr}$.
\item Since $S'\to S$ is \'etale and affine, it is quasi-finite and separated. By Zariski's main theorem, it is an open immersion followed by a finite morphism as $S'\to T\to S$. We may assume that $T$ is connected and replace $T$ by its normalization (as in the proof of \Cref{normln-cptfn}) if necessary.
\item We glue $X_{T}$ and $Z_{S\pr}$ along the open $X_{S\pr}$ to obtain $Z\ce Z_{S\pr}\cup_{X_{S\pr}}X_T$. It is clear that $Z\ra T$ is of finite type and we prove its separatedness. We restrict the diagonal morphism $\Delta_{Z/T}\colon Z\ra Z\times_{T}Z$ to four open subschemes $Z\times_{T}Z=(Z_{S'}\times_{T}Z_{S'})\cup (Z_{S'}\times_{T}X_T)\cup (X_T\times_{T}Z_{S'})\cup (X_T\times_{T}X_T)$ to check $\Delta_{Z/T}$ is a closed immersion. Since $Z_{S\pr}/T$ and $X_T/T$ are separated, by symmetry, it suffices to consider $\Delta_{Z/T}|_{Z_{S\pr}\times_{T}X_T}$, which is the closed immersion $\Delta_{X_{S\pr}/S\pr}$ by the isomorphism $Z_{S\pr}\times_T X_T\isom Z_{S\pr}\times_{S\pr}X_{S\pr}$ and the separatedness of $X_{S\pr}/S\pr$. Therefore, by Nagata's theorem, there is a compactification $Z_T$ of $Z\ra T$. Further, we have $\p{Z_T}_{S\pr}\isom Z_{S\pr}$. 
\item Since $T\to S$ is finite, it follows that $X_T\to Z_T\to S$ is also a compactification. Because $T$ is normal and $\dim T<3$, it is Cohen-Macaulay. Subsequently, the dominant morphism $T\to S$ is flat and so is $X_T\to X$. By \Cref{finitedominate} below, there is a compactification $Z_T\pr$ of $X_T\to S$ and a normal compactification $\ov{X}$ of $X\ra S$ with a finite morphism $Z_T\pr \ra \ov{X}$. The base change of $Z_T\pr\ra \ov{X}$ to $S_{\ov{s}}$ is a finite morphism between two normal compactifications of $X_{\ov{s}}\ra S_{\ov{s}}$, so it is an isomorphism and $\p{\ov{X}}_{\ov{s}}$ dominates $Z_{\ov{s}}$. \qedhere
\eenum 
\epf

\blem\label{finitedominate}
For a finite flat morphism $f: X\to Y$ between two connected schemes separated and of finite type over $S$ and a compactification $\overline{X}$ of $X\to S$, there is a compactification $\overline{X}'$ of $X\to S$ dominating $\overline{X}$ and a  compactification $\overline{Y}'$ of $Y\to S$ fitting into a commutative diagram
\[
\begin{tikzcd}
    X \arrow{r} \arrow{d}[swap]{f} & \overline{X}' \arrow{d}{\overline{f}} \\
    Y \arrow{r}           & \overline{Y}'
  \end{tikzcd},\q \text{where $\overline{f}$ is finite flat.}
\]

\elem

\begin{proof}
This proof comes from O. Gabber's idea and is similar to I. Vidal's proof in \cite{VidalI}*{Prop.~2.1.1~(ii)}.

By Nagata's theorem, there exists $\overline{Y}$ as a compactification of $Y$. The closure of the graph of $f$ in $\overline{X}\times_S \overline{Y}$ is denoted by $\Gamma$.  By the separatedness of $\overline{Y}\to S$, the first projection $p_1: \Gamma\to \overline{X}$ is an isomorphism $i: \Gamma_X\isom X$ when restricted to $X$.  Because $f$ is dominant, the second projection $p_2: \Gamma\to \overline{Y}$ when restricted to $Y$ coincides with $f$, up to a composition with the isomorphism $i$. Therefore, the morphism $p_2$ is finite and flat and satisfies the condition in the flattening theorem (\cite{RG}*{Thm.~5.2.2})  for $n=0$: it is flat outside $\overline{Y}\backslash Y$. Then there exists a $Y$-admissible (that is, the blowing-up locus is in $\overline{Y}\backslash Y$) blowing-up $\overline{Y}'\to \overline{Y}$ such that the strict transform $\overline{f}: \overline{X}'\to \overline{Y}'$ is flat. The flatness of $\overline{f}$ implies it is not only finite on $Y\subset \overline{Y}'$ but also on the entire $\overline{Y}'$.
\end{proof}

\brem\label{ChenyangXu}
If we consider normal compactifications which are algebraic spaces, the assumption in Theorem \ref{global-local} for $S$ can be more general: $S$ can be an arbitrary  excellent scheme. The idea is from Chenyang Xu, who suggests to consider equivariant normal compactifications. As in the proof of Theorem \ref{global-local} (ii) let $K(S')$ and $K(S)$ be fraction fields of $S'$ and $S$ respectively. Then we take a Galois extension $K/K(S)$ with Galois group $G$ containing $K(S')$ and the normalization $R$ of $S$ in $K$. The normalization of $S$ in $K(S')$ is denoted by $T$. With the similar technique in (iii) before, one can extend $\overline{X_{S'}}$ to a normal compactification $\overline{X_T}$ of $X_T\to T$ ($X_T$ can be replaced with its normalization if necessary). Then the base change of $\overline{X_T}$ to $R$ can be dominated by a $G$-equivariant normal compactification of $X_R\to R$ by the construction in \cite{Zhengindependence}*{3.7}. The normalization of the quotient $\overline{X_R}//G$ is a normal compactification and its base change along $S'\to S$ dominates $\overline{X_{S'}}$.
\erem

\subsection{Funtoriality of wild parts of $\pi_1$}

The wild parts of $\pi_1$ have pleasant functorial properties. The following lemma generalizes Vidal's results (\cite{VidalI}*{Prop.~2.1.1}).

\bprop\label{hahaha} In this proposition, we fix
\begin{itemize}
\item $S$ and $R$ are excellent scheme;
\item $X\to \overline{X}$ and $Y\to \overline{Y}$ be two open immersions of normal connected schemes;
\item $f: X\to Y$ is a morphism of schemes;
\item $\phi: \pi_1(X, \eta)\to \pi_1(Y, \xi)$ is the morphism of \'etale fundamental groups induced by $f$, where $\eta$ is a geometric generic point of $X$ and $\xi$ is the image of $\eta$ in $Y$.
\end{itemize}
\benumr
\item
\[
\text{If there is a commutative square}\q
\begin{tikzcd}
\overline{X} \arrow{r}
& \overline{Y}\\
X \arrow[hookrightarrow]{u}\arrow{r}{f}
& Y\arrow[hookrightarrow]{u},
\end{tikzcd}
\q \text{then} \q \phi(E_{X, \overline{X}})\subset E_{Y,\overline{Y}}.
\]
\item Let $X\ra \ov{X}\ra S$ and $Y\ra \ov{Y}\ra R$ be two compactifications.
\[
\text{If there is a commutative square}\q
\begin{tikzcd}
X\arrow{r}{f}\arrow{d}
& Y\arrow{d}\\
S\arrow{r}
&R,
\end{tikzcd}
\q \text{then} \q \phi(E_{X/S})\subset E_{Y/R}.
\]
\item When $f$ is a finite \'etale, we have $E_{X, \overline{X}}=E_{Y, \overline{Y}}\cap \pi_1(X, \eta)$.
If further $\p{X/S}$ and $\p{Y/S}$, then $$E_{X/S}=E_{Y/S}\cap \pi_1(X,\eta).$$
\item With the condition of (ii), if we further assume that $X=Y$ with $f=\id$, then $$E_{X/S}\subset E_{X/R}.$$ When $S\to R$ is proper, we have $$E_{X/S}=E_{X/R}.$$
\eenum
\eprop
\bpf \hfill
\benumr
\item By definition, $$E_{X, \ov{X}}=\ov{\bigcup_{x\to \ov{X}}\bigcup_{g\in \pi_1(X, \eta)}g^{-1}E_{x,v}g},$$ where $x$ and $v$ are geometric points of $\ov{X}$ and $X_x=X\times_{\ov{X}}\ov{X}_{{x}}$ respectively, and $v$ is above $\eta$.  It suffices to show that
    \[
    \phi\p{\bigcup_{x\to \ov{X}}\bigcup_{g\in \pi_1(X, \eta)}g^{-1}E_{x,v}g}\subset E_{Y, \ov{Y}},
    \]
    since $E_{Y,\ov{Y}}$ is closed. Any element $\sigma\in \bigcup_{x\to \ov{X}}\bigcup_{g\in \pi_1(X, \eta)}g^{-1}E_{x,v}g$ can be written as $\gamma_1^{-1}\phi_1\gamma_1$, where $\phi_1\in \pi_1(X_x, v)$ and $\gamma_1\in \pi_1(X, \eta)$. We let $y$ and $w$ be the images of $x$ and $v$ respectively, and consider the following diagram
$$\begin{tikzcd}[column sep=small]
v\arrow{r}
& X_x\arrow{rr}\arrow{dd}\arrow[dashed]{dr}
&
& \ov{X}_{x}\arrow{dd}\arrow{dr}
& \\
&
& Y_y\arrow[crossing over]{rr}
&
& \ov{Y}_{y}\arrow{dd}\\
\eta \arrow{r}
& X\arrow{dr}
&
& \ov{X}\arrow{dr}\arrow[leftarrow]{ll}
&\\
&
& Y\arrow{rr}\arrow[leftarrow, crossing over]{uu}
&
& \ov{Y}\end{tikzcd}$$
and a similar diagram for fundamental groups. These diagrams are not necessarily commutative.
Let $\phi_2$ be the image of $\phi_1$ in $\pi_1(Y_y, w)$ and let $\gamma_2$ be the image of $\gamma_1$ in $\pi_1(Y, \xi)$. Then we have $f(\sigma) =\gamma_2^{-1}\phi_2\gamma_2\in E_{Y,\overline
{Y}}$ and $f(E_{X,\ov{X}})\subset E_{Y, \ov{Y}}$.
\item For any normal compactification $\ov{Y}$ of $Y\to R$, we take a normal compactification of $\ov{Y}\times_R S$ to obtain a normal compactification of $X\to S$. By the result of (i) the assertion follows.
\item
We first prove for the case when $\ov{X}$ is dominated by the normalization of a normal compactification $\ov{Y}$ for $Y\to S$. Then $X_x$ is a pointed connected component of $Y_y\times_{Y} X$. To prove that $E_{x,v}=E_{y,w}\cap \pi_1(X, \eta)$, it suffices to show that the following diagram
\[
\begin{tikzcd}
  \pi_1(X_x,v) \arrow[hookrightarrow]{d} \arrow{r}{\psi_X} & \pi_1(X,\eta) \arrow[hookrightarrow]{d} \\
  \pi_1(Y_y,w) \arrow{r}{\psi_Y} & \pi_1(Y,\xi)
\end{tikzcd}
\]
is cartesian.
All vertical arrows are injective. Let $U=\psi_Y^{-1}(\pi_1(X, \eta))\subset \pi_1(Y_y, w)$. This is an open subgroup associated to an intermediate pointed finite\'etale covering $X_x\to V\to Y_{y}$. The correspondent point is denoted by $t\to V$. As the map $\pi_1(V, t)\to \pi_1(Y, \xi)$ is factorized by $\pi_1(X,\eta)$, then by \cite{SGA1new}*{Exp. V, 6.4}, the natural projection $V\times_{Y} X\to V$ admits a pointed section $s$. Composite the section $s$ with the natural pointed morphism $V\times_{Y} X\to Y_y\times_{Y}X$, we find that the pointed morphism $V\to Y_y\times_{Y}X$ must factors through $X_{x}$. That means $V\isom X_{x}$, hence that diagram is cartesian.

Let $q\in \pi_1(Y, \xi)$ and let $(X',\eta')$ be a pointed connected finite \'etale cover of $Y$ corresponding to the open subgroup $\pi_1(X',\eta')=q\pi_1(X, a)q^{-1}$. We denote by $\phi: \pi_1(X, \eta)\to \pi_1(X', \eta')$ the isomorphism induced by conjugation by $q$ in $\pi_1(Y, b)$ and $\mu: \pi_1(X', \eta')\to \pi_1(X, \eta)$ is denoted for its inverse. Then $q^{-1}E_{y,w}q\cap \pi_1(X, \eta)=\mu(E_{y, w}\cap \pi_1(X',\eta'))=\mu(E_{x,\eta'})$ (for $X'$ it is the same with the above argument). By the result of (i) we find that $\psi(E_{x, v'})\subset \psi(E_{X', \ov{X'}})\subset E_{X, \ov{X}}$. Thus $q^{-1}E_{y,w}q\cap \pi_1(X, \eta)\subset E_{X, \ov{X}}$. As this can be applied to all $q$ and all geometric point $x\to \ov{X}$, we then use the result of (i) to find that $E_{Y, \ov{Y}}\cap \pi_1(X, \eta)\subset E_{X, \ov{X}}$.
By Lemma \ref{finitedominate} above, we finish the proof.
\item[\up{iv}]  Every normal compactification of $X\to S$ is also a normal compactification of $X\to R$. Further, given any normal compactification $W$ of $X\to R$, there is a normal compactification $Z$ of $X\to R$ dominating $W$. Hence $Z$ is also a normal compactification of $X\to S$ and dominates $W$. Therefore $E_{X/S}=E_{X/R}$. \qedhere
\eenum
\epf
\bdt \label{catfun} \hfill
\benumr
\item Let $\CB$ be the (pointed) category whose objects consist of all open immersions $(X\stackrel{i}{\hookrightarrow}\ov{X})$ of normal connected schemes with fixed geometric generic points. Morphisms in $\CB$ consists of all commutative squares as in Proposition \ref{hahaha} (i) such that induced morphisms of geometric generic points of $Y, \ov{Y}, X, \overline
    {X}$ are compatible, where $(X\hookrightarrow\ov{X})$ and $(Y{\hookrightarrow} \ov{Y})$ are objects in $\CB$.
\item Let $\cD$ be the (pointed) category whose objects consist of all  $(X/S)$, where $X$ are normal connected with fixed geometric generic points. Morphisms in $\cD$ consists of all commutative squares as in Proposition \ref{hahaha} (ii) such that induced morphisms of geometric generic points of $X, S, Y, R$ are compatible, where $(X/S)$ and $(Y/R)$ are objects in $\cD$.
\eenum
\edt
With Definition \ref{catfun} and Proposition \ref{hahaha} (i)\& (ii) we obtain the following corollary.
\begin{cor}\label{functorial}
Here we use the same notations as Definition \ref{catfun}.
\benumr
\item The assignment
\[
  b\colon \CB  \ra \mathbf{Set} \qq
  (X\ra \ov{X}) \mapsto  E_{X,\ov{X}}
\]
is a functor. For each morphism $\phi$ in $\CB$ as in Definition \ref{catfun} (i), $b$ sends it to $b(\phi): E_{X,\ov{X}}\to E_{Y,\ov{Y}}$ defined by $f$.
\item There is a functor $E_{-/-}: \cD\to {\mathbf{Set}}$ defined as follows. For each $(X/S)\in Obj(\cD)$, $E_{-/-}$ sends it to $E_{X/S}$. For each morphism $\psi$ in $\cD$ as in Definition \ref{catfun} (ii), $E_{-/-}$ sends it to $E_{-/-}(\psi): E_{X/S}\to E_{Y/R}$ defined by $f$.
\item Let $(Y/R)$ be an object in $\cD$. Let $f: X\to Y$ be a morphism separated and of finite type. Then $f$ gives a morphism in $\cD$ and hence we have $E_{X/R}\subset E_{Y/R}$.
\eenum
\end{cor}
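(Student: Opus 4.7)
The plan is to derive all three assertions as formal corollaries of \Cref{hahaha}, with essentially no new mathematical content needed beyond that proposition; what remains is pure categorical bookkeeping.

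For (i), I would first check that $b$ is well-defined on morphisms. Given a morphism $\phi$ in $\CB$ as in \Cref{catfun}~(i), induced by an underlying scheme morphism $f\colon X\to Y$, \Cref{hahaha}~(i) states that the map $\phi\colon \pi_1(X,\eta)\to \pi_1(Y,\xi)$ satisfies $\phi(E_{X,\ov{X}})\subset E_{Y,\ov{Y}}$, so its restriction $b(\phi)\colon E_{X,\ov{X}}\to E_{Y,\ov{Y}}$ is defined. Functoriality (preservation of identities and of compositions) is then automatic because $b(\phi)$ is literally the restriction of $\phi$, and the assignment sending a morphism of $\CB$ to the induced morphism of \'etale fundamental groups is itself functorial; restricting a composite equals the composite of the restrictions, and the identity morphism restricts to the identity.

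For (ii), the same argument applies verbatim upon replacing \Cref{hahaha}~(i) with \Cref{hahaha}~(ii): a morphism $\psi$ in $\cD$ induces $\psi\colon \pi_1(X,\eta)\to \pi_1(Y,\xi)$ with $\psi(E_{X/S})\subset E_{Y/R}$, and the remaining functoriality check is formal. For (iii), under the running hypothesis that $X$ is normal connected with a geometric generic point compatible with that of $Y$, the composite $X\to Y\to R$ makes $(X/R)$ an object of $\cD$; paired with $\id_R$ on the base, the morphism $f$ then fills out a commutative square of the shape required in \Cref{catfun}~(ii), giving a morphism $(X/R)\to (Y/R)$ in $\cD$. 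Applying the functor $E_{-/-}$ built in (ii) to this morphism yields the desired inclusion $E_{X/R}\subset E_{Y/R}$.

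The only subtlety --- hardly an obstacle --- is verifying that the chosen geometric generic points are compatible across all four corners of the squares defining morphisms in $\CB$ and $\cD$, so that the induced maps of fundamental groups are canonical. This compatibility is already built into \Cref{catfun}, and the statement of \Cref{hahaha} is phrased exactly for this set-up, so the corollary follows once the categorical framework is unwound.
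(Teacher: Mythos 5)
Your proposal is correct and follows the same route as the paper: the corollary is a formal consequence of \Cref{hahaha}~(i) and (ii), with functoriality automatic because $b(\phi)$ and $E_{-/-}(\psi)$ are restrictions of the (functorially induced) maps on fundamental groups, and (iii) obtained by applying the functor of (ii) to the square built from $f$ and $\id_R$. The compatibility of base points you flag is indeed the only point to watch, and it is built into \Cref{catfun} exactly as you say.
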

\begin{cor}\label{myrealfunctorial}
Let $S$ be an excellent scheme. Let $\cD_S$ be the category of all normal connected schemes separated and of finite type over $S$. Let $X$ be a normal connected scheme. Let $\CE^X$ be the category consisting of all excellent schemes $S$ with $X\to S$ separated and of finite type.
\benumr
\item There is a covariant functor $E_{-/S}: \cD_S\to {\mathbf{Set}}$ defined as follows. For each object $X\in \cD_{S}$, $E_{-/S}$ sends it to $E_{-/S}(X)=E_{X/S}$ an object in ${\mathbf{Set}}$. For each morphism $f: X\to Y$ in $\cD_S$, $E_{-/S}$ sends it to $E_{-/S}(f): E_{X/S}\stackrel{f_{\ast}}{\to} E_{Y/S}$ a morphism in ${\mathbf{Set}}$.
\item There is a covariant functor $E_{X/-}: \CE^X\to {\mathbf{Set}}$ defined as follows. For each object $S\in \CE^X$, $E_{X/-}$ sends it to $E_{X/-}(S)= E_{X/S}$. For each morphism $h: S\to R$ in $\CE^X$, $E_{X/-}$ sends it to $E_{X/-}(h): E_{X/S}\hookrightarrow E_{X/R}$ a inclusion morphism in ${\mathbf{Set}}$.
\eenum
\end{cor}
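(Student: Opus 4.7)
The plan is to reduce both statements directly to \Cref{hahaha} together with \Cref{functorial}, which already establishes the functor $E_{-/-}\colon\cD\to\mathbf{Set}$ on the ``two-sided'' category of pairs. The two corollaries amount to restricting this two-sided functoriality along the inclusions $\cD_S\hookrightarrow\cD$ (fix the base) and $\CE^X\hookrightarrow\cD$ (fix the total space).

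For part (i), I would proceed as follows. Fix once and for all a geometric generic point $\eta_X$ for each $X\in\cD_S$, and for every morphism $f\colon X\to Y$ in $\cD_S$ pick a compatible geometric generic point of $Y$ above $f(\eta_X)$ so that $f$ lifts to a morphism in $\cD$ in the sense of \Cref{catfun}(ii) with $S=R$ and the structural square
\[
\begin{tikzcd}
X\arrow{r}{f}\arrow{d} & Y\arrow{d}\\
S\arrow[equal]{r} & S
\end{tikzcd}
\]
commuting. Then \Cref{hahaha}(ii) yields $\phi(E_{X/S})\subset E_{Y/S}$, where $\phi$ is the induced map on $\pi_1$; we define $f_\ast\ce \phi|_{E_{X/S}}$. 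Compatibility with composition and identities is inherited from the corresponding properties of the fundamental group functor: if $g\colon Y\to Z$ is another morphism in $\cD_S$, then $(g\circ f)_\ast=g_\ast\circ f_\ast$ because the $\pi_1$-map induced by $g\circ f$ is the composition of the $\pi_1$-maps induced by $f$ and $g$ (up to conjugation by the path changing the basepoint, which does not affect the restriction to the closed conjugation-invariant subset $E_{X/S}$).

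For part (ii), a morphism $h\colon S\to R$ in $\CE^X$ is by definition an $R$-morphism making the structural diagram
\[
\begin{tikzcd}
X\arrow[equal]{r}\arrow{d} & X\arrow{d}\\
S\arrow{r}{h} & R
\end{tikzcd}
\]
commute, which is precisely the situation of \Cref{hahaha}(iv) with $f=\id_X$. That lemma gives the inclusion $E_{X/S}\subset E_{X/R}$, and we set $E_{X/-}(h)$ to be this inclusion. Functoriality is then automatic: both composition and identities on $\CE^X$ go to inclusions of subsets of the ambient group $\pi_1(X,\eta)$, and inclusions compose to inclusions with the identity inclusion as unit.

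The only genuinely non-formal input is \Cref{hahaha}(ii) and (iv), which have already been proved above; the remaining content is bookkeeping about compatible basepoints. The mildest technical point — the only one I expect to require any care — is the choice of compatible geometric generic points needed to turn each morphism in $\cD_S$ into an honest morphism of $\cD$, but since $E_{X/S}$ is defined as a closed \emph{conjugation-invariant} subset of $\pi_1(X,\eta)$ (\Cref{wild part of pi}(iv)--(v)), the map $f_\ast$ is independent of this choice, so the construction descends to a well-defined functor on $\cD_S$ (and analogously on $\CE^X$).
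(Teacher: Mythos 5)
Your proposal is correct and follows essentially the same route as the paper, which simply derives both parts by restricting the two-sided functor of Corollary~\ref{functorial} (i.e.\ Proposition~\ref{hahaha}(ii) and (iv)) along the inclusions that fix the base $S$ or the total space $X$. The extra bookkeeping you supply about compatible basepoints and conjugation-invariance of $E_{X/S}$ is a reasonable elaboration of what the paper leaves implicit, not a different argument.
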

\begin{proof}
\vskip 0.3cm
By Corollary \ref{functorial} (ii) the assertion of (i)\&(ii) follows. \qedhere
\end{proof}

\section{Wildly compatible systems of virtual constructible sheaves}
\label{comp}
Now we consider a family of virtual constructible  (resp., locally constant) sheaves $\{\CF_{i}\}_{i\in I}$ on $X$ in $\p{X/S}$.  Here $\CF_{i}$ are elements in Grothendieck group $K_{c}(X, \mathbb{F}_{\lambda_i})$ (resp., $K_{\text{coh}}(X,\mathbb{F}_{\lambda_i})$) of constructible (resp., locally constant) sheaves of $\mathbb F_{\lambda_i}$-modules on $X$, and each $\lambda_i$ is a power of prime $\ell_i$.  In order to compare the wild ramifications of $\{\CF_{i}\}_{i\in I}$, we use Brauer traces.

\bpp[Brauer traces]\label{Brauer-traces}
Let $G$ be a profinite group. For a finite field $\mathbb{F}_{\lambda}$, where $\lambda$ is a power of a prime $\ell$, the subset consisting of $\ell$-regular elements (i.e., elements of orders prime to $\ell$) is denoted by $G_{\text{$\ell$-reg}}$. For $M\in K_{\cdot}(\mathbb{F}_{\lambda}[G])$ an element of Grothendieck group of finite dimensional $\mathbb{F}_{\lambda}$-vector spaces with continuous $G$-actions, the Brauer trace is a central function $\Tr^{\text{Br}}_M: G_{\text{$\ell$-reg}}\to W(\mathbb{F}_{\lambda})$, where $W(\mathbb{F}_{\lambda})$ is the Witt ring of $\mathbb{F}_{\lambda}$. Concretely, each eigenvalue $\zeta$ of action of $g\in G_{\text{$\ell$-reg}}$ on $M$ is in $\ov{\mathbb{F}}_{\lambda}$ and has a unique lift $[\zeta]$ as a root of unity of order prime to $\ell$, and Brauer trace is given by $\Tr^{\text{Br}}_M(g)=\sum [\zeta]$. Brauer trace is an additive function and is multiplicative with respect to the element of $K_{\cdot}(\mathbb{F}_{\lambda}[G])$.

Note that the Brauer trace, when restricted to $E_{X/S}$, takes value in a CM field $K_{\lambda_i}\ce\mathbb{Q}_{\lambda_i}\cap \mathbb{Q}(\zeta_{p^{\infty}})$ considered as a subfield of $\mathbb{Q}_{\lambda_i}$, where $\mathbb{Q}_{\lambda_i}$ are quotient fields of $W(\mathbb{F}_{\lambda_i})$(i.e., unramified extensions of $\mathbb{Q}_{\ell_i}$ with residue fields $\mathbb{F}_{\lambda_i}$). Let $V$ be a $\mathbb{Q}$-vector subspace of $\prod_{i\in I}K_{\lambda_i}$. In the sequel we use this notation of $K_{\lambda_i}$ for convenience and $\lambda_i$ are all assumed to be invertible on $S$.
\epp

\subsection{Wildly compatible locally constant sheaves}
\begin{defn}\label{coh-compatible}
Let $X$ be a normal connected in $\p{X/S}$. Let $V$ be a fixed subspace of $\prod_{i\in I}K_{\lambda_i}$. We say a system $\{\CF_i\}_{i\in I}$ of virtual locally constant $\mathbb{F}_{\lambda_i}$-sheaves on $X$ is a \emph{wildly compatible system} for $V$ if $(\Tr^{\text{Br}}_{\CF_i}(g))_{i\in I}\in V$ for all $g\in E_{X/S}$. We define a subgroup $K_{\text{coh}}(X/S,I,V)$ of $K_{\text{coh}}(X, I)\ce\prod_{i\in I}K_{\text{coh}}(X, \mathbb{F}_{\lambda_i})$ to be the subgroup consisting of $(\CF_{i})_{i\in I}$ in wildly compatible systems.
\end{defn}

\brem[\cite{VidalI}*{Rem.~2.2.1}]
Let $S$ and $X$ be the same as in Definition \ref{coh-compatible}. Assume that $\{\CF_{i}\}_{i\in I}$ are finitely many locally constant virtual sheaves over $\mathbb{F}_{\lambda_i}$. Let $V$ be a subspace of $\prod_{i\in I}K_{\lambda_i}$. Then $\{\CF_{i}\}$ form a wildly compatible system for $V$ if and only if there exists a normal compactification $\ov{X}_0$ of $X\to S$ such that $\{\CF_{i}\}$ satisfy the following condition: for every $g\in E_{X/S,\ov{X}_0}$, we have $(\Tr^{\text{Br}}_{\CF_i}(g))_{i\in I}\in V$. One side is easy to prove by $E_{X/S}\subset E_{X, \ov{X}_0}$. For the other side, we notice that $\pi_1(X)$ acts on $\CF_{i}$ by a finite quotient $Q$. Since $Q$ is finite and the category $\CC$ of normal compactifications of $X\to S$ is cofiltered, for the projection $p_Q: \pi_1(X,\ov{\eta})\to Q$, the finite quotient parts $E_{X/S}(Q)\ce p_Q(E_{X/S})$ and $E_{X, \ov{X}}(Q)\ce p_Q(E_{X,\ov{X}})$ satisfy $E_{X/S}(Q)=\bigcap_{\ov{X}\in \CC} E_{X, \ov{X}}(Q)=E_{X, \ov{X}_0}(Q)$ for a normal compactification $\ov{X}_0$ of $X\to S$ then the assertion follows.
\erem
\subsection{Wildly compatible constructible sheaves}
Given a virtual constructible sheaf $\CF_{\ell}$ on $X$, there is a stratification $X=\coprod_{i\in I} U_i$ denoted by $P_{\ell}$, where $U_i$ are normal connected and locally closed subschemes of $X$ such that restrictions of $\CF_{\ell}$ on $U_i$ are locally constant. Here by ``stratification'' we mean a ``nice stratification'', i.e., for
each stratum $U_i$, its closure is a disjoint union of strata $\ov{U_i}=\coprod_{j\in J}U_{j}$ where $J\subset I$.  But $X$ can have another stratification $Q_{\ell}$ for $\CF_{\ell}$: $X=\coprod_{j\in J}V_j$. We say a stratification $P_{\ell}$ is \emph{finer} than $Q_{\ell}$ or $P_{\ell}\succeq Q_{\ell}$, if $V_j=\coprod_{i(j)} U_{i(j)}$ for each $j$, here $i(j)$ are functions of index sets $I$ and $J$. Now we consider finitely many virtual constructible sheaves $\CF_{i}$ on $X$ with stratifications $P_{i}$. Then there is a common refinement $P$ so that we can compare their wild ramifications in terms of Brauer traces.
\begin{assumption}\label{VVV}
In the sequel when considering virtual constructible sheaves, we further assume that $V=\bigcap_{J}p_J^{-1}p_J(V)$, where $J$ goes through all finite subsets of $I$ and $p_J$ are projections to subspaces.

\end{assumption}
\begin{defn}\label{constr-compatible} Let $V$ be a fixed $\mathbb Q$-vector subspace of $\prod_{i\in I}K_{\lambda_i}$. We say a system $\{\CF_{i}\}_{i\in I}$ of virtual constructible $\mathbb F_{\lambda_i}$-sheaves on $X$ in $\p{X/S}$ is a \emph{wildly compatible system} for $V$, if for any finite subset $J\subset I$, there is a common stratification $X=\coprod_{\sigma\in \Sigma_J}X_{\sigma}$ such that $\{\CF_{j}|_{X_{\sigma}}\}_{j\in J}$ is a wildly compatible system of virtual locally constant sheaves on $X_{\sigma}$ for $p_J(V)$ and for each $\sigma\in \Sigma_J$. We define a subgroup $K_{c}(X/S, I,V)$ of $K_c(X, I)\ce\prod_{i\in I}K_c(X, \mathbb F_{\lambda_i})$ to be the subgroup consisting of such $(\CF_{i})_{i\in I}$.
\end{defn}
\brem\label{finiteset}
From Definition \ref{constr-compatible}, it is clear that a system $\{\CF_i\}_{i\in I}$ of virtual constructible sheaves is a wildly compatible system for $V$ if and only if its restriction to $J$ is a wildly compatible system for $p_J(V)$ for every finite subset $J\subset I$.
\erem
\begin{prop}\label{stratificationindependence}
For $\p{X/S}$ with $\dim S\leq 1$, let $\{\CF_{i}\}_{i\in I}$ be finitely many virtual locally constant sheaves on $X$. We fix a $\mathbb Q$-vector subspace $V$ of $\prod_{i\in I}K_{\lambda_i}$. If there is a common stratification $X=\coprod_{\sigma\in \Sigma}X_{\sigma}$ for $\{\CF_{i}\}_{i\in I}$ such that for each $\sigma$ we have $(\Tr^{\text{Br}}_{\CF_{i}|_{X_{\sigma}}}(g))_{i\in I}\in V$ for all $g\in E_{X_{\sigma}/S}$,  then $(\Tr^{\text{Br}}_{\CF_{i}}(g))_{i\in I}\in V$ for all $g\in E_{X/S}$.
\end{prop}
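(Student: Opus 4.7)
The plan is to exploit the open dense stratum $X_{\sigma_0}$ of the stratification to reduce the compatibility condition on $E_{X/S}$ to the corresponding condition on $E_{X_{\sigma_0}/S}$ guaranteed by the hypothesis. Since $X_{\sigma_0}$ is normal connected and open dense in $X$, the inclusion induces a continuous surjection $\phi\colon\pi_1(X_{\sigma_0},\eta)\twoheadrightarrow\pi_1(X,\eta)$, and each locally constant $\CF_i$ on $X$ pulls back to $\CF_i|_{X_{\sigma_0}}$ through $\phi$; consequently $\Tr^{\text{Br}}_{\CF_i}(\phi(h))=\Tr^{\text{Br}}_{\CF_i|_{X_{\sigma_0}}}(h)$ for all $h$ in the $\ell_i$-regular part. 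Hence it suffices to prove the key equality $\phi(E_{X_{\sigma_0}/S})=E_{X/S}$ as subsets of $\pi_1(X,\eta)$: any $g\in E_{X/S}$ lifts to some $h\in E_{X_{\sigma_0}/S}$ with $\phi(h)=g$, and the hypothesis applied to the single stratum $\sigma_0$ then delivers $(\Tr^{\text{Br}}_{\CF_i}(g))_{i\in I}\in V$.

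\textbf{Reduction via the global--local formula.} The inclusion $\phi(E_{X_{\sigma_0}/S})\subset E_{X/S}$ is part of the functoriality in Corollary \ref{functorial}. For the reverse inclusion, I would apply Theorem \ref{global-local} (valid for $\dim S\leq 1<3$) to both $X/S$ and $X_{\sigma_0}/S$, expressing each $E_{-/S}$ as the closure of a union of conjugates of local wild parts indexed by geometric points $\bar s\to S$. Continuity of $\phi$ together with compactness of $\pi_1(X_{\sigma_0},\eta)$ let one commute $\phi$ past the closure, and surjectivity lets one absorb the conjugating elements inside $\pi_1(X,\eta)$; this reduces the task to the purely local equality
\[
  \phi\bigl(E_{X_{\sigma_0,\bar s}/S_{\bar s}}\bigr)=E_{X_{\bar s}/S_{\bar s}}\quad\text{inside }\pi_1(X,\eta)
\]
for every geometric point $\bar s\to S$.

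\textbf{Local analysis at each compactification.} After replacing $S$ by a strict Henselization, I would establish the per-compactification identity $\phi(E_{X_{\sigma_0},\overline{X}})=E_{X,\overline{X}}$ for any normal compactification $\overline{X}$ of $X\to S$ (which is automatically a compactification of $X_{\sigma_0}\to S$ as well, since $X_{\sigma_0}$ is open dense in $X\subset\overline{X}$). For $\bar x\in\overline{X}\setminus X$, the open immersion $X_{\sigma_0,\bar x}\hookrightarrow X_{\bar x}$ of normal schemes (after fixing compatible connected components containing the chosen geometric points) induces a continuous surjection $\pi_1(X_{\sigma_0,\bar x})\twoheadrightarrow\pi_1(X_{\bar x})$ sending pro-$p$-Sylow subgroups onto pro-$p$-Sylow subgroups, whence $\phi(E_{\bar x,v_{\sigma_0}})=E_{\bar x,v}$. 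For $\bar x\in X$, the strict Henselization $X_{\bar x}$ of the normal scheme $X$ is simply connected in the \'etale sense, so $E_{\bar x,v}=\{1\}$; moreover any contribution from $E_{\bar x,v_{\sigma_0}}$ to $\pi_1(X,\eta)$ factors through $\pi_1(X_{\bar x})=1$ by the commutative diagram $X_{\sigma_0,\bar x}\to X_{\bar x}\to X$. Taking closures of unions of conjugates yields the displayed equality for each $\overline{X}$.

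\textbf{Main obstacle.} The principal technical difficulty is passing from this per-compactification equality to the intersection equality $\phi(E_{X_{\sigma_0}/S})=E_{X/S}$, because $E_{X_{\sigma_0}/S}$ is defined as an intersection over the normal compactifications of $X_{\sigma_0}\to S$, which a priori form a strictly larger cofiltered category than those of $X\to S$. To circumvent this, I would reduce to a single finite quotient $Q$ of $\pi_1(X,\eta)$ through which all monodromy representations $\rho_i$ factor (possible since the $\CF_i$ are finitely many locally constant $\mathbb{F}_{\lambda_i}$-sheaves, hence have finite monodromy), apply the stabilization remark after Definition \ref{coh-compatible} (following \cite{VidalI}*{Rem.~2.2.1}) to obtain a single compactification $\overline{X}_0$ of $X$ with $E_{X/S}(Q)=E_{X,\overline{X}_0}(Q)$, and then either sharpen $\overline{X}_0$ within the cofiltered category of compactifications of $X_{\sigma_0}$ to simultaneously realize $E_{X_{\sigma_0}/S}(Q)$, or equivalently establish cofinality of normal compactifications of $X\to S$ among those of $X_{\sigma_0}\to S$ via a Nagata-style closure-of-graph plus normalization argument.
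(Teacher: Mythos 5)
Your argument hinges on the single equality $\phi\p{E_{X_{\sigma_0}/S}}=E_{X/S}$ for the dense stratum $\sigma_0$ alone, and this is exactly where the proof breaks. The per-compactification identity $\phi\p{E_{X_{\sigma_0},\ov{X}}}=E_{X,\ov{X}}$ that you establish is fine (surjectivity of $\pi_1$ of a dense open of a normal scheme, Sylow subgroups map onto Sylow subgroups, compactness to pass to closures), but $E_{X_{\sigma_0}/S}$ is the intersection over \emph{all} normal compactifications of $X_{\sigma_0}\to S$, and this family is genuinely larger than the family of compactifications of $X\to S$: a modification of $\ov{X}$ with center inside $X\setminus X_{\sigma_0}$ is a compactification of $X_{\sigma_0}$ that is \emph{not} dominated by any compactification of $X$ (domination would force the map to be an isomorphism over $X$). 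So the cofinality you invoke in your "main obstacle" paragraph fails, and the stabilization-at-a-finite-quotient $Q$ variant fails for the same reason: the compactification $\ov{U}_0$ of $X_{\sigma_0}$ realizing $E_{X_{\sigma_0}/S}(Q)$ need not be a compactification of $X$, and $E_{X_{\sigma_0},\ov{U}_0}(Q)$ can be strictly smaller than $E_{X,\ov{X}_0}(Q)$. Only the inclusion $\phi\p{E_{X_{\sigma_0}/S}}\subset E_{X/S}$ comes for free from \Cref{functorial}; the reverse inclusion is false in general. Indeed, if it held, then $E_{\tau/S}$ of \Cref{wild part of pi}\,(v) would always equal $E_{X/S}$, and the hypothesis of the proposition would never need the lower-dimensional strata --- whereas the paper's later arguments (e.g.\ the reduction "$E_{Y/S}(Q)=(E_{\tau/S}(Y))(Q)$ after shrinking $Y$" in the proof of \Cref{sevenoperators}) rely on these being different in general.

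The missing ingredient is precisely what the paper's (very short) proof cites: O.~Gabber's valuative criterion (\cite{VidalII}*{6.1 and 6.2\,(ii)}), which yields
$E_{X/S}=\ov{\bigcup_{\sigma}\mathrm{Im}\p{E_{X_{\sigma}/S}}}$
with the union taken over \emph{all} strata. The content of that criterion is that the wild ramification "lost" when one intersects over the finer compactifications of the dense stratum is recovered exactly by the wild parts of the lower-dimensional strata; your approach discards those strata at the outset and therefore cannot recover it. To repair the proof you should either import Gabber's criterion as the paper does, or prove the full stratification formula directly --- but the latter is a substantial result, not a consequence of functoriality plus the global--local formula of \Cref{global-local}.
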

\begin{proof}
This propostion and its proof is similar to Vidal's in \cite{VidalII}*{6.2~(ii)}. By the valuative criterion of O. Gabber (\cite{VidalII}*{6.1}), $E_{X/S}=\ov{\bigcup_{\sigma} \mathrm{Im}\p{E_{X_{\sigma}/S}}}$. Hence for every $g\in E_{X/S}$, we have $(\Tr^{\text{Br}}_{\CF_{i}}(g))_{i\in I}\in V$.
\end{proof}
\begin{cor}\label{coh-ccompatible}
Assumption of $X$ being as in Proposition \ref{stratificationindependence},  for any index set $I$ and any $V$, we have $$K_{\mathrm{coh}}(X/S,I,V)=K_{c}(X/S, I,V)\cap K_{\mathrm{coh}}(X, I).$$
\end{cor}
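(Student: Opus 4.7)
The plan is to show both inclusions of the asserted equality separately, with the nontrivial direction reducing to a direct application of Proposition \ref{stratificationindependence} after using Assumption \ref{VVV} to pass to finite subsets of $I$.

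For the easy inclusion $K_{\mathrm{coh}}(X/S, I, V) \subset K_c(X/S, I, V) \cap K_{\mathrm{coh}}(X, I)$, I would note that if $(\CF_i)_{i\in I}$ is a wildly compatible system of virtual locally constant sheaves, then for any finite $J \subset I$ the trivial stratification $X = X$ already witnesses $\{\CF_j|_X\}_{j \in J}$ as a wildly compatible system of locally constant sheaves for $p_J(V)$, because $(\Tr^{\mathrm{Br}}_{\CF_i}(g))_{i\in I} \in V$ implies $(\Tr^{\mathrm{Br}}_{\CF_j}(g))_{j\in J} \in p_J(V)$ for every $g \in E_{X/S}$. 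Hence $(\CF_i)$ lies in $K_c(X/S, I, V)$, and it is visibly in $K_{\mathrm{coh}}(X, I)$.

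For the reverse inclusion, take $(\CF_i)_{i \in I} \in K_c(X/S, I, V) \cap K_{\mathrm{coh}}(X, I)$. I want to verify that $(\Tr^{\mathrm{Br}}_{\CF_i}(g))_{i\in I} \in V$ for every $g \in E_{X/S}$. By Assumption \ref{VVV}, $V = \bigcap_J p_J^{-1}p_J(V)$ over all finite $J \subset I$, so it suffices to show that for every such $J$ and every $g \in E_{X/S}$, the tuple $(\Tr^{\mathrm{Br}}_{\CF_j}(g))_{j \in J}$ lies in $p_J(V)$. Fix such a $J$. By the definition of $K_c(X/S, I, V)$, there exists a common stratification $X = \coprod_{\sigma \in \Sigma_J} X_\sigma$ such that $\{\CF_j|_{X_\sigma}\}_{j \in J}$ is a wildly compatible system of virtual locally constant sheaves on $X_\sigma$ for $p_J(V)$; this is meaningful because the restriction of the globally locally constant sheaf $\CF_j$ to $X_\sigma$ is itself locally constant.

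The key step is now to invoke Proposition \ref{stratificationindependence} applied to the finitely many virtual locally constant sheaves $\{\CF_j\}_{j \in J}$ on $X$, together with the stratification $\{X_\sigma\}_{\sigma \in \Sigma_J}$ and the subspace $p_J(V) \subset \prod_{j \in J} K_{\lambda_j}$. The hypothesis that $(\Tr^{\mathrm{Br}}_{\CF_j|_{X_\sigma}}(g))_{j \in J} \in p_J(V)$ for all $g \in E_{X_\sigma/S}$ and all $\sigma$ is precisely what the wildly compatible hypothesis on each stratum provides. The proposition then yields $(\Tr^{\mathrm{Br}}_{\CF_j}(g))_{j \in J} \in p_J(V)$ for all $g \in E_{X/S}$, as desired. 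No real obstacle is expected: the corollary is essentially a bookkeeping consequence of Proposition \ref{stratificationindependence} combined with the intersection hypothesis on $V$, and the only subtle point is remembering to reduce to finite $J$ before applying the proposition, since the latter is stated for finite families.
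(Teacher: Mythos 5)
Your proof is correct and follows exactly the route the paper intends: the corollary is stated without proof as an immediate consequence of Proposition \ref{stratificationindependence}, and your argument—trivial stratification for the easy inclusion, then reduction to finite $J$ via Assumption \ref{VVV} followed by an application of Proposition \ref{stratificationindependence} to each finite subfamily—is precisely that intended deduction. Nothing to add.
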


\brem\label{Compare}
Here we give three special examples of wildly compatible systems of virtual locally constant sheaves and constructible sheaves.
\begin{itemize}
\item[\up{i}] When $\#I=1$ and $V=0$, a virtual constructible sheaf $\CF$ in $K_{c}(X/S, I, V)$ can be viewed as it ``has zero wild ramification''.
\item[\up{ii}] When $\#I=2$ and let $V$ be the kernel of the map
$$K_{\lambda_1}\times K_{\lambda_2}\to \mathbb{Q}, \qquad a\mapsto \frac{1}{[K_{\lambda_1}: \mathbb{Q}]}\Tr_{K_{\lambda_1}/\mathbb{Q}}(a)-\frac{1}{[K_{\lambda_2}: \mathbb{Q}]}\Tr_{K_{\lambda_2}/\mathbb{Q}}(a)$$
For $(\CF_1,\CF_2)\in K_{\text{coh}}(X/S, I, V)$ they ``have the same wild ramification'' in a weaker sense.
\item[\up{iii}] Let $L$ be a field of characteristic zero with embeddings $\iota=(\iota_i)_{i\in I}: L\to \prod_{i\in I}\ov{\mathbb Q_{\lambda_i}}$ and let $V$ be $(\prod_{i\in I}K_{\lambda_i})\cap Image((\iota_i)_{i\in I})$. If $(\Tr^{\text{Br}}_{\CF_i}(g))_{i\in I}\in V$ for all $g\in E_{X/S}$, then $(\CF_{i})_{i\in I}\in K_{\text{coh}}(X/S, I,V)$.  In this case,  for any fixed index set $I$, $(\CF_i)_{i\in I}\in K_{\text{coh}}(X/S, I,V)$ if and only if for all subset $J\subset I$ with $\#J\leq 2$, $(\CF_{j})_{j\in J}$ is a wildly compatible system for $p_J(V)$.
\end{itemize}
\begin{itemize}
\item[(a)] For (i), when $S$ is the spectrum of an excellent Henselian discrete valuation ring with residue characteristic exponent $p\geq 1$ or an algebraically closed field $k$ with characteristic $p$, and $X$ is a normal connected scheme separated and of finite type over $S$, by \Cref{Vidalcompare} we obtain $K_{c}(X/S, I, V)=K_{c}(X, \mathbb{F}_{\lambda})^0_t$ which is Vidal's notion.
\item[(b)] For (ii), when $S$ is the spectrum of an excellent Henselian discrete valuation ring of residue characteristic exponent $p\geq 1$ and $X$ is a scheme separated and of finite type over $S$, we obtain $K_{\text{coh}}(X/S, I, V)=\Delta_{coh}(X, \mathbb{F}_{\lambda_1}, \mathbb{F}_{\lambda_2})$ which is Yatagawa's notion. Further with Assumption \ref{VVV} for $V$, we have  $K_c(X/S, I, V)=\Delta_c(X, \mathbb{F}_{\lambda_1},\mathbb{F}_{\lambda_2})$. In fact, our generalization is the same with Kato's (\cite{Kato2016wildram}*{Def.~2.2}).
\item[(c)]Let $I, V$ and $S, X$ be the same with (iii) with extra Assumption \ref{VVV} for $V$. If for any $J\subset I$ a finite subset , there is a stratification $X=\coprod_{\sigma\in \Sigma_J}X_{\sigma}$ such that for each $g\in E_{X_{\sigma}/S}$ there exists $a\in L$ and $\Tr^{\text{Br}}_{\CF_i|X_{\sigma}}(g)=\iota_i(a)$, then $(\CF_i)_{i\in I}\in K_c(X/S, I, V)$. In this case, for any fixed index set $I$, we have $(\CF_i)_{i\in I}\in K_c(X/S, I, V)$ if and only if for all subset $J\subset I$ with $\#J\leq 2$, $(\CF_j)_{j\in J}$ is a wildly compatible system for $V$.
\end{itemize}
\erem

\bprop\label{compatiblestrictlocal}
For $\p{X/S}$, let $T\stackrel{h}{\ra} S\ra R$ be morphisms of excellent schemes with $\p{S/R}$.
\benumr
\item 
If we also denote $X_T\to X$ by $h$, then 
\[
h^{\ast}K_c(X/S,I,V)\subset K_c(X_T/T,I,V).
\]
\item Suppose $S$ is regular with $\dim S<3$. If $S_{\ov{s}}\ra S$ is denoted by $s$, then we have 
\[
\text{
$\p{\CF_i}_{i\in I}\in K_c(X/S,I,V)$\q if and only if \q 
$\p{s^{\ast}\CF_i}_{i\in I}\in K_c(X_{\ov{s}}/S_{\ov{s}},I,V)$\q for all $\ov{s}$.
}
\]

\item  We have $K_c(X/R,I,V)\subset K_c(X/S,I,V)$.
\eenum
\eprop
\bpf \hfill
\benumr
\item It suffices to prove for the case when $\CF_{i}$ are locally constant. The assertion follows directly from Corollary \ref{hahaha},(ii).
\item For $(\CF_{i})_{i\in I}\in K_c(X/S, I, V)$, by (i), we conclude that $\{s^{\ast}(\CF_{i})\}_{i\in I}$ is a wildly compatible system.

For the other side, we reduce to the case when $\#I$ is finite and $\CF_{i}$ are locally constant. There exists a finite quotient $E_{X/S}(Q)$ such that for all $g\in E_{X/S}$, $\Tr^{\text{Br}}_{\CF_i}(g)=\Tr^{\text{Br}}_{\CF_i}(\ov{g})$ for all $i\in I$, where $\ov{g}$ is the image of $g$ in $E_{X/S}(Q)$. By Theorem \ref{global-local}, $E_{X/S}(Q)=\bigcup_{\ov{s}\to S}\bigcup_{h\in \pi_1(X, a)}\ov{h}E'_{X_{(\ov{s})}/S_{(\ov{s})}}(Q)\ov{h^{-1}}$. Thus there is a geometric point $\ov{s_g}\to S$ such that $\ov{g}=\ov{h}\ov{g'}\ov{h^{-1}}$, where $g'\in E_{X_{(\ov{s_g})}/S_{(\ov{s_g})}}(Q)$ and $\ov{g'}$ is the image of $g'$ under $\pi_1(X_{(\ov{s})}, b)\to \pi_1(X, a)\to Q$. By assumption, $\{s^{\ast}(\CF_{i})\}_{i\in I}$ are wildly compatible systems for all geometric points $\ov{s}\to S$. Finally $(\Tr^{\text{Br}}_{\CF_i}(g))_{i\in I}=(\Tr^{\text{Br}}_{\CF_i}(\ov{g}))_{i\in I}=(\Tr^{\text{Br}}_{\CF_i}(\ov{h}\ov{g'}\ov{h^{-1}}))_{i\in I}=(\Tr^{\text{Br}}_{\CF_i}(\ov{g'}))_{i\in I}=(\Tr^{\text{Br}}_{s^{\ast}\CF_i}(g'))_{i\in I}\in V$. So $\{\CF_{i}\}_{i\in I}$ is also a wildly compatible system.

\item The proof is directly from the result of Corollary \ref{myrealfunctorial}, (ii). \qedhere
\eenum
\epf

\section{Preservation by Six Operations}\label{fun}

\begin{thm}\label{sevenoperators}
For $\p{X/S}$ and $\p{Y/S}$ with $\dim\leq 1$ and a morphism $f\colon X\ra Y$, we have:
\benumr
\item $f^{\ast}: K_{c}(Y, I)\to K_c(X, I)$ induces $\fust: K_{c}(Y/S, I,V)\to K_{c}(X/S, I,V)$;
\item $\flst\, : K_c(X, I)\to K_c(Y, I)$ induces $\flst\;\! : K_{c}(X/S, I,V)\to K_{c}(Y/S, I,V)$;
\item $\fls\,\; : K_c(X, I)\to K_c(Y, I)$ induces $\fls\ : K_{c}(X/S, I,V)\to K_{c}(Y/S, I,V)$;
\item $\fus\,\, : K_c(X,I)\to K_c(Y, I)$ induces $\fus\; : K_{c}(X/S, I,V)\to K_{c}(Y/S, I,V)$;
\item $\dhom: K_c(X, I)\times K_c(X, I)\to K_c(X, I)$ induces
 \[
 \text{$\dhom: K_{c}(X/S, I,V)\times K_{c}(X/S, I,V)\to K_{c}(X/S, I,V)$ when $V$ is a subalgebra;}
 \]
\item $-\otimes-: K_c(X, I)\times K_c(X, I)\to K_c(X, I)$ induces
\[
\text{$-\otimes-: K_{c}(X/S, I,V)\times K_{c}(X/S, I,V)\to K_{c}(X/S, I,V)$ when $V$ is a subalgebra;}
\]
\item $D_X: K_c(X,I)\to K_c(X,I)$ induces $D_X: K_{c}(X/S, I,V)\to K_{c}(X/S, I,V)$.
\eenum
\end{thm}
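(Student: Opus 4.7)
The plan is a two-step reduction followed by invocation of the local preservation theorems of Vidal \cite{VidalII} and Yatagawa \cite{Yatagawa}. First, \Cref{finiteset} lets me assume $I$ is finite with the projected subspace $p_J(V)$ in place of $V$ (\Cref{VVV} makes this consistent). Second, \Cref{compatiblestrictlocal}(ii), which applies because $S$ is regular with $\dim S \le 1$, identifies $K_c(X/S,I,V)$ with the intersection over geometric points $\bar s \to S$ of $K_c(X_{\bar s}/S_{\bar s},I,V)$ under the pullback $s^{\ast}$. Since each of the seven operations commutes with the pro-smooth base change $S_{\bar s}\to S$ (via proper base change for $f_!$ and its formal consequences for $f_{\ast}, f^{\ast}, \otimes, R\CH om$, and via the definitions $f^!=D_X f^{\ast} D_Y$ and $D_X=R\CH om(-,K_X)$ for the others), the theorem reduces to the case where $S$ is the spectrum of an excellent strictly henselian discrete valuation ring or a separably closed field.

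In this local case, by \Cref{Compare} and \Cref{Vidalcompare}, the subgroup $K_c(X/S,I,V)$ for $V=0$ coincides with Vidal's ``virtual wild ramification zero'' group, and for Yatagawa's specific $V$ with $\#I=2$ with her ``same wild ramification'' group. Vidal \cite{VidalII}*{Cor.~0.2} proves (i)--(iv) in the first setting and Saito--Yatagawa \cite{Yatagawa}*{Cor.~4.1} proves (v)--(vii) in the second. The essential output of these works is a stratum-wise formula: for each operation $T$ and each $(\CF_i) \in K_c(X/S,I,V)$, one can find a common stratification $Y=\coprod_{\sigma}Y_{\sigma}$ of the target so that $\Tr^{\text{Br}}_{T(\CF_i)|_{Y_\sigma}}(g)$ is a $\mathbb{Z}$-linear combination (or, for (v)--(vi), a polynomial combination) of Brauer traces $\Tr^{\text{Br}}_{\CF_i|_{X_\tau}}(g')$ for suitable $g' \in E_{X_\tau/S}$ on source strata, with coefficients independent of the $(\CF_i)$. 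The promotion from Vidal's $V=0$ (resp.\ Yatagawa's special $V$) to arbitrary $V$ is then formal: $\mathbb{Q}$-linearity of $V$ propagates through linear combinations, the subalgebra hypothesis in (v)--(vi) absorbs the multiplicativity of Brauer traces, and \Cref{stratificationindependence} passes from stratum-wise compatibility back to compatibility on $E_{Y/S}$. \Cref{functorial}(iii) supplies the needed inclusion $\phi(E_{X/S})\subset E_{Y/S}$ used in item (i).

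The main obstacle lies in item (ii) for non-proper $f_{\ast}$ and, by duality, in (iv) for $f^!$. Both require the stratum-wise formula above for an open immersion $j_{\ast}$, which in Vidal's framework rests on Orgogozo's theorem on commutation of nearby cycles with alterations, together with a careful tame--wild decomposition of vanishing cycles at the boundary of a compactification; once (ii) is available, (iv) follows by combining it with (vii). Items (v) and (vi) are straightforward from multiplicativity of Brauer traces once the product of stratifications is taken, and (vii) follows from the observation that the dualizing complex on $X$ has tame wild part along any normal compactification, so that Verdier duality twists Brauer traces on each stratum only by a tame character, leaving the tuple $(\Tr^{\text{Br}}_{D_X(\CF_i)}(g))_{i\in I}$ in $V$ whenever $(\Tr^{\text{Br}}_{\CF_i}(g))_{i\in I}\in V$ for $g\in E_{X/S}$.
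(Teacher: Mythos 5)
Your top-level skeleton matches the paper's: reduce to finite $I$ by \Cref{finiteset}, reduce to strictly local $S$ by \Cref{compatiblestrictlocal}, collapse (i)--(iv) to $\fust$ and $\fls$, and then make everything turn on the fact that the Brauer traces of the pushforward are $\mathbb{Q}$-linear combinations, with coefficients independent of the index $i$ and supported on the image of $E_{X/S}$, of Brauer traces of the input --- so that an arbitrary $\mathbb{Q}$-subspace $V$ is preserved by linearity. That last point is exactly the paper's mechanism for (ii)/(iii). However, your claim that this ``stratum-wise formula'' is simply ``the essential output'' of \cite{VidalII}*{Cor.~0.2} and \cite{Yatagawa}*{Cor.~4.1} is where the real work is being elided: those corollaries assert preservation for their specific $V$ and do not hand you the formula. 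To get it one must go back into Vidal's proof and redo the d\'evissage --- treat immersions, split $X$ and $Y$ into generic and closed fibres, reduce to affine $Y$ and factor $f$ into a closed immersion followed by relative curves, pass to a common trivializing Galois cover, and only then invoke Vidal's trace formula (\cite{VidalII}*{Cor.~3.0.5, Thm.~0.1}) together with the integrality and $i$-independence of the coefficients (\cite{VidalII}*{Prop.~2.2.1, 2.3.1}) and the support statement (\cite{VidalII}*{Cor.~3.0.7}). This is precisely what the paper does, and your proposal names the right ingredients but does not carry out the reduction that makes the formula applicable; as written, the ``promotion to arbitrary $V$ is formal'' step rests on an input you have not actually produced.

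There is also a genuine error in your treatment of (vii): Verdier duality does not merely ``twist Brauer traces by a tame character.'' After reducing (via Noetherian induction and the $\jls/\jlst$ case of (ii)) to a smooth lisse open $U$, one has $D_U\CF=\CF^{\vee}(d)[2d]$, and the relevant identity is $\Tr^{\text{Br}}_{M^{\vee}}(g)=\Tr^{\text{Br}}_{M}(g^{-1})$; membership in $V$ is then preserved because $E_{X/S}$ is symmetric under $g\mapsto g^{-1}$ (the Tate twist is harmless because the cyclotomic character is trivial on the wild part). Your ``tame character'' argument gives no reason why the dual tuple lies in $V$ for a general $\mathbb{Q}$-subspace $V$, since conjugating roots of unity is not multiplication by a common scalar. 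Finally, a small slip: (iv) follows from (i) and (vii) via $\fus=D_Xf^{\ast}D_Y$ (or directly from $f^{\ast}=\fus$ in Grothendieck groups, as the paper does via \cite{Zhengsixop}*{Cor.~9.5}), not from (ii) and (vii).
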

This theorem also implies results for the three cases in Remark \ref{Compare}, including Vidal's (\cite{VidalII}*{Cor.~0.2}) and Yatagawa's (\cite{Yatagawa}*{Cor.~4.1}).  In \ref{Compare}, note that $V$ are subalgebras for (a) and (c) but not (b).
\begin{proof}
By \Cref{compatiblestrictlocal}, it suffices to prove for the case when $S$ is strictly local. Further $f^{\ast}=\fus$ by \cite{Zhengsixop}*{Cor.~9.5}, $\flst=\fls$ by  \cite{Illusie-Zheng}*{Thm.~3.14} generalizing a theorem of Laumon (\cite{Laumon}). So (i)--(iv) reduces to $\fust$ and $\fls$. By \Cref{finiteset}, we may assume that $I$ is a finite set.

\begin{itemize}
\item[(i)\& (iv)] This proof is similar to \cite{VidalI}*{Prop.~2.3.3} and \cite{Yatagawa}*{Cor.~4.1}. For $(\CF_i)_{i\in I}\in K_c(Y, I)$, we may assume that they are constructible sheaves. There is a common stratification for $\CF_i$: $Y=\coprod_{\sigma}Y_{\sigma}$ such that $(\CF_i)_{i\in I}\in K_{\text{coh}}(Y_{\sigma}/S,I, V)$. Take preimages of $Y_{\sigma}$ and take their stratifications we have $X=\coprod_{\sigma,\mu}X_{\sigma \mu}$ such that $\fust \CF_i|_{X_{\sigma \mu}}$ are locally constant sheaves. By Proposition \ref{hahaha} the assertion follows.

\item[(ii)\&(iii)] Let $(a_{i})_{i\in I}\in K_{c}(X/S, I,V)$. First, we prove the assertion when $f$ is an immersion. If $f$ is an open immersion, then $Y=X\coprod (Y\backslash X)$ and $\fls a_{i}|_X=a_{i}$, $\fls a_{i}|_{Y\backslash X}=0$. If $f$ is a closed immersion, then we have $\flst a_{i}|_X=a_{i}$, $\flst a_{i}|_{Y\backslash X}=0$. By taking a stratification of $Y\backslash X$, we check on each stratum and find that $\{a_{i}\}_{i\in I}$ for a wildly compatible system.

For general cases, we decompose $X=X_{\eta}\coprod X_{s}$, $Y=Y_{\eta}\coprod Y_{s}$ as closed fibre parts and generic fibre parts. Write $f_{\eta}: X_{\eta}\to Y_{\eta}$ and $f_{s}: X_s\to Y_s$. Let $i_{Y_{\eta}}: Y_{\eta}\to Y$ and $i_{Y_s}: Y_s\to Y$ be immersions. By $\fls a_{i}=i_{Y_{\eta}!}f_{\eta !}(a_{i}|_{X_{\eta}})+i_{Y_s!}f_{s!}(a_{i}|_{X_s})$ for $i\in I$ and the proof above for immersions, it suffices to prove for $f_{\eta}$ or $f_{s}$. The assertion is local so it suffices to prove when $Y=\Spec A$ is affine. Further, we take $X=\Spec B$ affine and therefore $Y=\Spec B[x_1, \cdots, x_r]/I$. The morphism $X\to Y$ is decomposed as a closed immersion followed by a series of morphisms of relative dimension one.

When $f$ is a closed immersion the proof has been finished above. Now let $f$ be a morphism of relative dimension one. Since $a_{i}$ are constructible, we may assume that $Y$ is normal connected and $(\fls a_{i})_{i \in I}\in K_{\text{coh}}(Y, I)$. Further we may assume that $X$ is normal connected and $(a_{i})_{i\in I}\in K_{coh}
(X/S,I,V)$.  For any pair of components $(a_{1}, a_{2})$ of an element $(a_{i})_{i\in I}$ in $K_{\text{coh}}(X/S,I,V)$ its associated galois \'etale covering $p: V\to X$ trivializing them with galois group $H$. By shrinking $Y$ to an open dense subscheme if necessary, we may assume that images of this pair of components under $(f\circ p)_{!}$ are in Grothendieck groups of locally constant sheaves. Let $Q$ be a finite quotient of $\pi_1(Y)$ acting on $\fls  a_{1}$, $\fls a_{2}$, $(f\circ p)_{!}\underline{\mathbb F_{\lambda_1}}$ and $(f\circ p)_{!}\underline{\mathbb F_{\lambda_2}}$.

By shrinking $Y$ to an open dense subscheme, we may assume that $E_{Y/S}(Q)=(E_{\tau/ S}(Y))(Q)$, where $\tau$ is the generic point of $Y$. Let $g\in E_{Y/S}$, then by \cite{VidalII}*{Cor.~3.0.5 and Thm.~0.1}, we express the Brauer traces of $a_{\ell_i}$ for $i=1,2$ as the following
$$\Tr^{\text{Br}}_{\fls a_{i}}(g)=\frac{1}{|H|}\sum_{h'\in H',  p_{P'}(h')=g'}\Tr_{R\Gamma_c(V_{\bar{\tau}}, E_{\lambda_i})}(h')\times \Tr^{\text{Br}}_{a_{i}}(p_H(h')),$$
where $P'$ is a pro-$p$-subgroup of $Gal(\bar{\tau}/\tau)$ with image $P$ under $Gal(\bar{\tau}/\tau)\to \pi_1(Y)$ and $H'=P'\times H$, $E_{\lambda_i}$ is a finite extension of $\mathbb Q_{\ell_i}$ such that its integer ring has the residue field $\mathbb F_{\lambda_i}$. For all $g\in E_{X/S}$ we have $(\Tr^{\text{Br}}_{a_{i}}(g))_{i\in I}\in V$. We need to show that for all $g'\in E_{Y/S}$ we have $(\Tr^{\text{Br}}_{\fls a_{i}}(g'))_{i\in I}\in V$. By \cite{VidalII}*{Prop.~2.2.1} and \cite{VidalII}*{Prop.~2.3.1}, the traces $\Tr_{R\Gamma_c(V_{\ov{\tau}},E_{\lambda_i})}(h')$ are integers independent of $i$. Then by \cite{VidalII}*{Cor.~3.0.7 and Thm.~0.1}, if $h'\in H'$ such that $\Tr_{R\Gamma_c(V_{\ov{\tau}}, E_{\lambda_i})}(h')\neq 0$ then $p_{H}(h')$ is in the image of $E_{X/S}$ by $\pi_1(X)\to H$. Thus $(\Tr^{\text{Br}}_{\fls a_{i}}(g))_{i\in I}\in V$ for every $g\in E_{Y/S}$. So we finish the proof.

\item[(v)]
Since $\dhom_X(\CF_i,\CG_i)\isom D_X(\CF_i\otimes D_X(\CG_i))$, the assertion follows from (vi) and (vii).

\item[(vi)] The assertion follows since Brauer trace of a tensor product is a product of Brauer traces.

\item[(vii)] Let $\{\CF_i\}_{i\in I}\in K_{c}(X, I,V)$. Let $j: U\to X$ be an open immersion such that $\CF_i$ are lisse and $U\to S$ is smooth. Let $i: Y=X\backslash U\to X$ be the closed immersion. Then $D_X \CF_i=D_X \jls \just \CF_i + D_X \ilst \iust \CF_i$. It suffices to prove that $D_X\jls \just \CF_i$ form a wildly compatible system. By definition we have $D_X\jls \just \CF_i=\jlst D_U \just \CF_i =\jlst \dhom_U(\just \CF_i, \mathbb{F}_{\lambda_i}(d)[2d])$. By the result of (ii), it suffices to consider $\dhom_U(\just \CF_i, \mathbb{F}_{\lambda_i}(-d)[-2d])$.
 In fact, for a dual module $M^{\vee}$ of $M$, the Brauer trace satisfies $\Tr^{\text{Br}}_{M^{\vee}}(g)=\Tr^{\text{Br}}_{M}(g^{-1})$. Note that as a result of definition, $E_{X/S}$ is symmetric: if $g\in E_{X/S}$, then $g^{-1}\in E_{X/S}$.
By (i), $\just \CF_i$ form a wildly compatible system.  So $(j^{\ast}\CF_i)^{\vee}(-d)[-2d]$ also form a wildly compatible system. Thus $D_U\jls\just \CF_i$ form a compatible system and by Noether's induction we finish the proof.\qedhere
\end{itemize}
\end{proof}
\section{Relation with compatible $\ell$-adic virtual constructible sheaves}
\label{relation}
Let $X$ be a smooth curve over $\mathbb{F}_{p^f}$. Let $\ell_i$ be primes distinct with $p$, where $i\in I$. Let $E_{i}/\mathbb{Q}_{\ell_i}$ be finite extensions of fields with residue fields $\mathbb{F}_{\lambda_i}$ and $\lambda_i$ are integer powers of $\ell_i$. We fix a field $L$ of characteristic zero and embeddings $\{\iota_i: L\hookrightarrow E_{i}\hookrightarrow \ov{\mathbb{Q}_{\ell_i}}\}$. Let $K$ be the function field of $X$. We recall the definition of (strictly) compatible system of $E_i$-virtual sheaves on $X$ (\cite{Fujiwara}*{1.2}). A family of virtual constructible sheaves $\{\CF_i\}_{i\in I}$ where $\CF_i$ are virtual sheaves over $E_{i}$, is called an $L$-compatible system if for all $x\in |X|$, all geometric points $\ov{x}\to x$ and all $g\in \mathrm{Gal}(\kappa(\ov{x})/\kappa(x))$ such that $g$ are integer powers of geometric Frobenius as $F_0^{f}$ , there exist $a_g\in L$ depending on $g$ such that $\Tr(g, ({\CF_i})_{\ov{x}})=\iota_i(a_g)$. Now we find a connection between $E_i$-compatible systems and wildly compatible systems.

Let $\CO$ be a complete discrete valuation ring and let its fraction field be $E$ of characteristic zero and its residue field be $F$ of characteristic $\ell>0$. Let $X$ be a Noetherian and connected scheme. Then the category of lisse $\ell$-adic sheaves on $X$ is equivalent to the category of continuous $\ell$-adic representations of $\pi_1(X,\ov{x})$, where $\ov{x}$ is a geometric point of $X$. Let $K(X,\CO), K(X, E), K(X, F)$ be the Grothendieck groups of constructible sheaves of $\CO, E, F$-modules over $X$ respectively. Let $j^{\ast}: K(X, \CO)\to K(X, E)$ be the ring homomorphism induced by the exact functor
$$\Modc (X, \CO)\to \Modc (X, E), \qquad M\mapsto E\otimes_{\CO}M$$
Let $i^{\ast}: K(X,\CO)\to K(X, F)$ be the ring homomorphism given by the triangulated functor
$$D^b_c(X , \CO)\to D^b_c(X, F),\qquad M\mapsto F\otimes_{\CO}^{\mathbb{L}} M$$
By \cite{Zhengsixop}*{Prop.~9.4} , $j^{\ast}$ is an isomorphism. Thus we obtain the decomposition morphism $d\ce i^{\ast}\circ {j^{\ast}}^{-1}: K(X, E)\to K(X, F)$.

\begin{thm}\label{ladic}
For any $L$-compatible system $\{\CF_i\}_{i\in I}$ of virtual constructible sheaves on a smooth integral scheme $X$ over $\mathbb{F}_{p^f}$, the image $\{d\p{\CF_i}\}_{i\in I}$ is a wildly compatible system of virtual constructible $\mathbb{F}_{\lambda_i}$-sheaves.
\end{thm}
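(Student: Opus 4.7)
The idea is to reduce to lisse sheaves on a single stratum, combine the structural fact that elements of $E_{X/\Spec\mathbb{F}_{p^f}}$ act with finite $p$-power order on every $\ell_j$-adic representation (so that the Brauer trace agrees with the $\ell$-adic trace via Teichm\"uller), and then use Chebotarev density at closed points to match traces against the $L$-compatibility data. The ambient subspace is the one from Remark~\ref{Compare}~(iii), namely $V = \bigl(\prod_{i\in I} K_{\lambda_i}\bigr) \cap \mathrm{Image}(\iota)$. By Remark~\ref{finiteset} it suffices to take $J\subset I$ finite, and by the definition of wildly compatible systems of constructible sheaves it then suffices to prove wild compatibility of $\{d(\CF_j|_{X_\sigma})\}_{j\in J}$ on each stratum $X_\sigma$ of a common stratification on which all $\CF_j$ are lisse. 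On such a stratum, each $\CF_j|_{X_\sigma}$ corresponds to a continuous representation $\rho_j\colon \pi_1(X_\sigma,\overline\eta)\to \GL_n(\CO_j)$ with $\CO_j$ the ring of integers of $E_j$, and $d(\CF_j|_{X_\sigma})$ corresponds to the reduction $\overline\rho_j$ modulo $\ell_j$.

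Next I would establish a Brauer/$\ell$-adic trace comparison for wild elements. For $g\in E_{X_\sigma/\Spec\mathbb{F}_{p^f}}$, Definition~\ref{wild part of pi} places $g$ in the closure of images of pro-$p$ Sylow subgroups in $\pi_1(X_\sigma,\overline\eta)$. Since the kernel of $\GL_n(\CO_j)\twoheadrightarrow \GL_n(\mathbb{F}_{\lambda_j})$ is the pro-$\ell_j$ group $1+\ell_j M_n(\CO_j)$ and any continuous homomorphism from a pro-$p$ group to a pro-$\ell_j$ group with $p\ne\ell_j$ is trivial, $\rho_j(g)$ must lie in a finite $p$-power-order subgroup of $\GL_n(\CO_j)$. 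Its eigenvalues in $\overline E_j$ are therefore $p$-power roots of unity, and these coincide with the Teichm\"uller lifts of the eigenvalues of $\overline\rho_j(g)$, so that
\[
\Tr(\rho_j(g)) \;=\; \Tr^{\text{Br}}_{d\CF_j}(g) \quad \text{in } W(\mathbb{F}_{\lambda_j})\subset \CO_j.
\]

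To finish I would invoke Chebotarev density at closed points of the smooth integral $\mathbb{F}_{p^f}$-scheme $X_\sigma$: Frobenii $F_x$ are dense in $\pi_1(X_\sigma,\overline\eta)$, and for each conjugacy class of $p$-power-order elements in a common finite quotient $Q$ of $\pi_1(X_\sigma,\overline\eta)$ surjecting onto each $\overline\rho_j(\pi_1)$, there is a closed point $x$ with $\overline{\rho_j(F_x)}$ conjugate to $\overline{\rho_j(g)}$ in $Q_j$ for every $j\in J$. Applying the topological Jordan decomposition $\rho_j(F_x) = s_j u_j$ inside the compact $\ell_j$-adic Lie group $\rho_j(\pi_1(X_\sigma,\overline\eta))$ (with $s_j$ semisimple of finite $p$-power order and $u_j\in 1+\ell_j M_n(\CO_j)$ topologically unipotent, so $\overline{u_j}=1$), one realises $s_j$ as a limit of powers $\rho_j(F_x^{a_k})$ with $a_k\to 1\pmod{\ord(\overline{s_j})}$ and $a_k\to 0$ $\ell_j$-adically. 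Applying the $L$-compatibility to $F_x^{a_k}$ (regarded as Frobenii at closed points of base changes of $x$) and passing to the limit shows $\Tr(s_j)=\Tr(\rho_j(g))$ lies in $\iota_j(L)$ coherently across $j\in J$, producing $a_g\in L$ with $\iota_j(a_g)=\Tr^{\text{Br}}_{d\CF_j}(g)$ and thus $(\Tr^{\text{Br}}_{d\CF_j}(g))_{j\in J}\in p_J(V)$.

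\textbf{Main obstacle.} The hardest step is the last one: extracting a single $a_g\in L$ simultaneously for all $j\in J$. The $L$-compatibility only directly pins down the trace of $\rho_j(F_x)$, not that of its $\ell_j$-regular part $s_j$, yet it is $\Tr(s_j)=\Tr(\rho_j(g))$ that we must control. Bridging this requires a topological Jordan decomposition carried out compatibly across the primes $\ell_j$ for $j\in J$, using that the $p$-power order of $\rho_j(g)$ is determined by the image of $g$ in the common finite quotient $Q$ and hence is independent of $j$, so that the Frobenius approximations and the resulting $L$-element $a_g$ can be arranged to be simultaneously valid for all $j$.
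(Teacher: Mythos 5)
Your proposal follows the same skeleton as the paper's proof: reduce to finitely many lisse sheaves, note that elements of $E_{X/\Spec(\mathbb{F}_{p^f})}$ act through finite $p$-groups (so Brauer traces of the reductions agree with $\ell_j$-adic traces via Teichm\"uller lifts), and then use Chebotarev density to transfer the $L$-compatibility at Frobenius elements to the wild elements. Where you genuinely add something is in making explicit the step the paper compresses into ``By Chebotarev density \dots\ Therefore'': the hypothesis of $L$-compatibility controls the full $\ell_j$-adic trace $\Tr(\rho_j(F_x^n))$, not the Brauer trace of the reduction, and you bridge the two by the topological Jordan decomposition $\rho_j(F_x)=s_ju_j$, using that $\ord(s_j)$ equals the $p$-power order of the image of $g$ in the common finite quotient $Q$ and is therefore independent of $j$, so that one sequence of exponents $a_k\equiv 1\pmod{p^M}$ with $a_k\to 0$ in each $\mathbb{Z}_{\ell_j}$ serves all $j\in J$ at once. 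This is a correct and worthwhile refinement of the argument the paper only sketches. (Minor point: the topologically unipotent part lies in $1+\mathfrak{m}_jM_n(\CO_j)$ rather than $1+\ell_jM_n(\CO_j)$; this is harmless since the image of $F_x$ in $Q_j$ is a $p$-element, whence $\ov{u_j}=1$.)

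The one place where your write-up asserts rather than proves is the very last inference: from $\Tr(s_j)=\lim_k\iota_j(b_k)$ with $b_k\in L$ you conclude that $\Tr(s_j)=\iota_j(a_g)$ for a single $a_g\in L$. Since the image $\iota_j(L)\subset\ov{\mathbb{Q}_{\ell_j}}$ is in general not closed, a limit of its elements need not lie in it; the uniformity in $j$ that you emphasize in your ``main obstacle'' paragraph guarantees a common approximating sequence $b_k$ but does not by itself force the limit into the image of $\iota$. Some extra input is needed to finish (for instance, that $\Tr(s_j)$ ranges over a finite set of sums of $p^M$-th roots of unity, or an argument that the $b_k$ eventually stabilize). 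You should be aware that the paper's own proof elides this point even more completely---it passes directly from density of Frobenius classes to the identity $\Tr^{\text{Br}}_{d(\CF_i)}(g)=\iota_i(a_g)$---so this is a shared soft spot rather than a defect peculiar to your approach; still, it is the step that most deserves a careful justification.
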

\begin{proof}
It suffices to prove for $\#I=2$ and locally constant sheaves $\CF_1,\CF_2$ on $X$. If they are $L$-compatible, then by definition, for all closed point $x\in |{X}|$, all $\ov{x}\to x$ and all $g\in \mathrm{Gal}(\kappa(\ov{x})/\kappa(x))$ an integer power of $F_{0}^{f}$, there are $a_g\in L$ such that $\Tr(g, (\CF_{i})_{\ov{x}})=\iota_i(a_g)$. Note that $(\CF_i)_{\ov{\eta}}$ give rise to representations $\rho_i: \pi_1(X,\ov{\eta})\to \mathrm{GL}_{n_i}(E_i)$, where $\ov{\eta}$ is the geometric generic point of $X$. By Chebotarev density theorem (\cite{Serrezeta1965}*{Thm.~7}), the sets consisting of the images of conjugacy classes of $\{\mathrm{Frob}_{x}^n\}_{n\in \mathbb{Z}}$ under $\rho_i$ for all unramified $x\in X$ are dense subsets in $\rho_i(\pi_1(X, \ov{\eta}))$ respectively. Therefore for all $g\in E_{X/\Spec(\mathbb{F}_{p^f})}\subset \pi_1(X,\ov{\eta})$, we have $\Tr^{\text{Br}}_{d(\CF_1)}(g)=\iota_1(a_g)$ and $\Tr^{\text{Br}}_{d(\CF_2)}(g)=\iota_2(a_g)$ for $a_g\in L$. This shows that $\{d(\CF_i)\}_{i\in I}$ is a wildly compatible system in the sense of Remark \ref{Compare}, (c).
\end{proof}
\brem
In \cite{Zhengindependence} Weizhe Zheng has defined $L$-compatible systems over excellent Henselian discrete valuation rings. Further in \cite{LuZhengwild} Qing Lu and Weizhe Zheng have proved that Theorem \ref{ladic} can be generalized to this case.
\erem

\begin{bibdiv}
\begin{biblist}
% \bibselect{big}

\bibselect{bibliography}

\end{biblist}
\end{bibdiv}

%\bibliographystyle{habbrv.bst}
%\bibliography{Cf1}

\end{document}